
\documentclass{ws-jnmp}

\usepackage[T1]{fontenc}
\usepackage{graphicx}

\usepackage{color}
\definecolor{MyLinkColor}{rgb}{0,0,0.4}


\newcommand{\R}{{\mathbb R}}

\newcommand{\Z}{{\mathbb Z}}
\newcommand{\N}{{\mathbb N}}
\newcommand{\s}{\mathbb S}
\newcommand{\h}{\rho}

\newcommand{\U}{\mathcal{U}}

\newcommand{\A}{\mathcal{A}}
\newcommand{\bA}{\mathbb{A}}
\newcommand{\B}{\mathcal{B}}

\newcommand{\ov}{\overline}
\newcommand{\p}{\partial}
\newcommand{\0}{\Omega}

\newcommand{\tr}{\mathop{\rm tr}\nolimits}
\newcommand{\T}{\mathcal{T}}
\newcommand{\e}{\varepsilon}
\newcommand{\wt}{\widetilde}

\newcommand{\ap}{{0+}}
\newcommand{\im}{\mathop{\rm Im}\nolimits}

\newcommand{\Kern}{\mathop{\rm Ker}\nolimits}
\newcommand{\spn}{\mathop{\rm span}\nolimits}

\begin{document}

\newtheorem{thm}{Theorem}[section]
\newtheorem{prop}[thm]{Proposition}
\newtheorem{cor}[thm]{Corollary}

\arttype{Article} 

\markboth{A.--V. Matioc}
{Internal water waves with a critical layer}

%
%

\title{{\sc Steady internal water waves with a critical layer bounded by the wave surface}}

\author{\footnotesize ANCA--VOICHITA MATIOC}

\address{Institute for Applied Mathematics, Leibniz University of Hanover, Welfengarten 1\\ 30655 Hanover, Germany\\
\email{matioca@ifam.uni-hannover.de}}

\maketitle


\begin{abstract}
In  this paper we construct small amplitude periodic internal waves traveling at the boundary region between two rotational and  homogeneous fluids with different densities. 
Within a period, the waves we obtain have the property that the gradient of the stream function associated to the fluid beneath the interface vanishes, on the wave surface, at exactly two points. 
Furthermore, there exists a critical layer  which is bounded from above by the wave profile.
Besides, we prove, without excluding the presence of  stagnation points, that 
if the  vorticity function associated to each fluid in part is real-analytic, bounded, and non-increasing, then  capillary-gravity steady internal waves  
are a priori real-analytic.
Our new method  provides  the real-analyticity of capillary and capillary-gravity waves with stagnation points traveling over a homogeneous rotational fluid under the same restrictions on the vorticity function.
\end{abstract}

\keywords{Internal waves; streamlines; vorticity; real-analytic.}

\ccode{2010 Mathematics Subject Classification: 35Q35;  76B45; 76B55, 37N10.}

\section{Introduction}

In this paper we consider two-dimensional internal periodic waves traveling at the  interface between two layers of immiscible fluids with different densities,
 under the rigid lid assumption.
In our context, the fluids have constant vorticity and we construct internal traveling waves with a critical layer and stagnation points in both gravity and capillary-gravity regimes.

A critical layer is a region of  fluid consisting entirely of closed streamlines, and stagnation points are fluid particles traveling horizontally with the same speed as the wave.
Flows with stagnation points are known  \cite{CJ} to be relevant for the description of background states for tsunamis.
Concerning waves traveling over a homogeneous fluid, it was observed in \cite{C2, CS3} that the strong elliptic maximum principle rules out the existence of smooth  irrotational waves with stagnation points or critical layers.
On the other hand, it was shown in \cite{M,P, T} that there exist extreme Stokes  waves which are Lipschitz continuous and have stagnation points  and a sharp corner at the crest.
For the rotational case the picture is different.  
Existence of exact periodic traveling gravity waves with general vorticity has been established first in \cite{CS2} by means of bifurcation and degree
theory, and in \cite{CS41}    in the case of waves with bounded and discontinuous  vorticity.
By construction, the waves in \cite{CS2, CS41} do not possess stagnation points and critical layers. 
Linear gravity water waves with stagnation points which travel on currents with constant vorticity   were studied in   \cite{Mats}, and the exact picture of the waves has been obtained first in \cite{W},  
and later on in \cite{CV2} by employing complex methods.
These waves possess at most a critical layer which is located within the fluid body.

One of the  factors which can determine the presence of critical layers is  the vertical stratification of the fluid. 
It was recently shown in \cite{EMM2} that continuously stratified gravity waves with  constant vorticity may possess two critical layers  and the qualitative picture of the streamlines
may be different than that for homogeneous flows \cite{CV2, W}.
Another factor is the presence of an affine vorticity distribution, situation analysed in \cite{EEV, EEW}, when the waves  may possess arbitrarily many critical layers.
For a survey on traveling waves with critical layers  we refer to the  article \cite{EW}.

Internal waves are usually created by the presence of two different layers of water combined with a certain configuration of relief and current.
They form where the water above and below the interface is either moving in opposite directions or in the same direction at different speeds. 
A well-known example are the internal waves which move from the Atlantic Ocean to the Mediterranean Sea, at the east of the Strait of Gibraltar.
In this case the two layers correspond to different salinities, whereas the current is caused by the tide passing through the strait.
The mathematical theory of waves at the interface between two layers of immiscible fluid of different densities has
attracted a lot of interest and we refer to the  survey article of
Helfrich and Melville \cite{HM} which provides  a good overview on  steady
internal solitary waves in such systems.

In this paper we construct periodic steady internal waves between two layers of homogeneous fluids with different densities in the gravity and capillary-gravity regimes.
  Particularly, we find  solutions which possess exactly one critical layer which is bounded from above by the wave profile.
 Moreover, the gradient of the stream function associated to the fluid beneath the interface vanishes at exactly two points on the interface, meaning that, close to these points, fluid particles located below the interface 
 move almost horizontally with velocity approaching  the wave speed.
The fluid above the interface does not sense the presence of these stagnation points. 
 In contrast to the situation in \cite{EMM2}, where  waves with critical layers were constructed in an unstable regime, the solutions we find are stably  stratified. 
 While in \cite{M,T}   the wave surface is only Lipschitz continuous in a neighbourhood of  the stagnation point,  our solutions are real-analytic.
In the pure gravity case this follows by using a regularity result for free boundary value problems, cf. \cite{KNS}, whereas when capillary plays a role we provide a new idea based on parabolic theory.
It is worthwhile to mention that our method may be applied  to prove  the a priori analyticity of  water waves traveling over an homogeneous fluid, when capillary effects are incorporated, provided the vorticity function is bounded, analytic,  and 
non-increasing even if stagnation points exist (situation complementary to that analysed in  \cite{DH2, Hen2}).

We determine, by using higher order expansions and elliptic  maximum principles, a precise picture of the streamlines in the fluids which could be used to  describe  the particle trajectories.
Due to the analyticity of the interface,  one could choose also linear theory, similarly as in \cite{CV, Mats, IK,  AM}, to obtain an  approximative picture of  the exact particle paths.  

The outline of the paper is as follows: in Section \ref{S:1} we present the mathematical model, re-express the problem in terms of the stream functions and 
provide the main regularity result Theorem \ref{MT:0}.  
In Section \ref{S:2} we write the problem as a  nonlocal equation for the wave profile, and use bifurcation theory to prove the existence statement Theorem \ref{MT:1}
 as well as higher order expansion for the bifurcation curves.
 In the last section, based on Theorem \ref{MT:0}, we illustrate the precise picture of the streamlines for some of the waves obtained in Theorem \ref{MT:0}.

\section{The governing equations and a regularity result}\label{S:1}

In this section we present the governing equations for two-dimensional internal water waves travelling at the 
boundary region between two rotational fluids with different densities under the rigid lid  condition at the top.
The bottom of the ocean is assumed to be flat and we denote with $\s$ the unit circle, i.e. $\s$ stands for $\R/ 2\pi \Z$.
The fluid domain   $\0:=\s\times(-1,1)$ contains two  fluids layers separated by a sharp interface $y=\eta(t,x)$, the wave profile,  which defines the two subsets 
\begin{align*}
 \0_\eta^b:=\{(x,y)\,:\,\text{$ x\in\s $ and $-1<y<{\eta(t,x)}$}\}, \qquad \0^t_\eta:=\{(x,y)\,:\,\text{$ x\in\s $ and ${\eta(t,x)}<y<1$}\}.
\end{align*}
The domain $\0_\eta^b$ contains a  Newtonian fluid with constant
 density $\h,$ velocity field $(u, v)$, and pressure $P$, and we denote by  $\ov \h,$  $(\ov u,\ov  v)$, and  $\ov P$
the density, velocity, and pressure of the fluid located at the top. 
The line $y=-1$ is the impermeable bottom of the ocean and $y=1$ is the rigid top of the two-fluid system.
Assuming that both fluids are inviscid, the dynamics  can be described  by Euler's equations (see \cite{Ligh} for a justification of the inviscid flow).
Being  interested in traveling internal  waves, we   presuppose that there exists a positive constant $c$, the wave speed, such that $\eta(t,x)=\eta(x-ct)$,
\[
 (u,v,P)(t,x,y)=(u,v,P)(x-ct,y),\qquad (\ov u,\ov v,\ov P)(t,x,y)=(\ov u,\ov v,\ov P)(x-ct,y),
\]
and formulate the problem in a frame moving with the wave.
The equations of motion  are the steady state two-dimensional Euler equations
\begin{subequations}\label{eq:2}
\begin{equation}\label{eq:20}
\left\{
\begin{array}{rllll}
(u-c) u_x+vu_y&=&-P_x/\h&\text{in}\ \0_\eta^b,\\
(u-c) v_x+vv_y&=&-P_y/\h-g&\text{in}\ \0_\eta^b,\\
u_x+v_y&=&0&\text{in}\ \0_\eta^b,
\end{array}
\right.
\qquad
\left\{
\begin{array}{rllll}
(\ov u-c) \ov u_x+\ov  v\hspace{0.7mm}\ov u_y&=&-\ov P_x/\ov \h&\text{in}\  \0_{\eta}^t,\\
(\ov u-c) \ov v_x+\ov v\hspace{0.7mm}\ov v_y&=&-\ov P_y/\ov \h-g&\text{in}\ \0_{\eta}^t,\\
\ov u_x+\ov v_y&=&0&\text{in}\  \0_{\eta}^t,
\end{array}
\right.
\end{equation}
subjected to the boundary conditions, see e.g. \cite{BLS, CS2},
\begin{equation}\label{eq:21}
\left\{
\begin{array}{rllll}
 v&=&0&\text{on}\  y=-1,\\
 \ov v&=&0&\text{on}\ y=1,
\end{array}
\right.
\qquad\text{and}\qquad
\left\{
\begin{array}{rllll}
P&=&\ov P-\sigma\eta''/(1+\eta'^2)^{3/2}&\text{on} \ y=\eta(x),\\
v&=&(u-c)\eta'&\text{on}\ y=\eta(x),\\
\ov v&=&(\ov u-c)\eta'&\text{on}\ y=\eta(x),
\end{array}
\right.
\end{equation}
\end{subequations}
with $\sigma\geq0$ being the surface tension coefficient and $g$ the constant of gravity.
To be more precise,  we are interested in finding solutions   of problem \eqref{eq:2} within the class
 \begin{align}\label{regularity}
\eta\in C^{2+\alpha}(\s),\quad  (u,v,P)\in  \left(C^{1+\alpha}\big(\ov{\0_\eta^b}\big)\right)^3,
\quad (\ov u,\ov v,\ov P)\in  \left(C^{1+\alpha}\big(\ov{\0_\eta^t}\big)\right)^3
 \end{align}
for some $\alpha\in (0,1).$
Similarly as in \cite{CS2}, we  reformulate the problem \eqref{eq:2}  by introducing the stream functions 
  $ \psi : {\0_{\eta}^b}\to\R$ and $\ov \psi: \0_{\eta}^t\to\R$ defined by the relations 
\[ 
  \psi(x,y):= m+\int_{-1}^y( u(x,s)-c)\, ds\qquad\text{and}\qquad\ov \psi(x,y):=\int_{\eta(x)}^y( \ov u(x,s)-c)\, ds.
\]
Here $m$  is a positive constant fixed such that $\psi=0$ on $y=\eta(x).$ 
Indeed, taking into account that 
$\nabla \psi=(-v,u-c)$ and $ \nabla\ov\psi=(-\ov v, \ov u-c) $
it follows, by using the chain rule and \eqref{eq:21}, that $\psi$ is constant on $y=\eta(x) $ and we can make this choice.
Similarly,   $\ov\psi$ is constant on the rigid lid $y=1,$ and we let $\ov m:=\psi(0,1)$ be this constant.
The above properties show that the streamlines coincide with the level curves of the stream functions.
Both fluids being  rotational, we introduce their   vorticity    by
\[
\omega:=u_y-v_x=\Delta\psi\quad \text{in $\0_{\eta}^b$}\qquad\text{and}\qquad \ov\omega:=\ov u_y-\ov v_x=\Delta\ov \psi\quad \text{in $\0_{\eta}^t$.}
\]
Next, we assume that 
\begin{equation}\label{eq:cond1}
u-c<0 \ \ \text{in $\ov{\0_\eta^b}$} \qquad\text{and}\qquad \ov u-c<0 \ \ \text{in $\ov{\0_\eta^t}$},
\end{equation}
and exclude so the presence of  stagnation points in the fluids.
However, this assumption \eqref{eq:cond1}  will be dropped later on, after expressing the problem \eqref{eq:2} in terms of the stream functions. 
This will allow us to find solutions of  system \eqref{eq:2} which do possess stagnation points.

Condition \eqref{eq:cond1} guarantees  \cite{CS2, M1}
that the vorticity is a single-valued function of the stream function, that is,
there exist $\gamma\in C^\alpha([-m,0])$ and $\ov\gamma\in C^\alpha([0,-\ov m])$, the   vorticity functions, with
\[
\omega(x,y)=\gamma(-\psi(x,y)),\quad (x,y)\in\0_{\eta}^t\qquad\text{and}\qquad \ov\omega(x,y)=\ov\gamma(-\ov\psi(x,y)),\quad (x,y)\in\0_{\eta}^b.
\]
Since, by Bernoulli's principle, the quantities 
\[
\frac{( u-c)^2+ v^2}{2}+\frac{ P}{ \h}+gy-\int_0^{ \psi} \gamma(-s)\, ds\qquad\text{and}\qquad
\frac{( \ov u-c)^2+ \ov v^2}{2}+\frac{ \ov P}{ \ov \h}+gy-\int_0^{\ov \psi} \ov \gamma(-s)\, ds
\]
are constant in $\0_{\eta}^t$ and $\0_{\eta}^b,$ respectively, we find, by restricting to   $y=\eta(x)$ that  
\begin{equation*}
\frac{|\nabla  \psi|^2}{2}+gy+\frac{ P}{ \h}=const.\qquad\text{and}\qquad \frac{|\nabla \ov \psi|^2}{2}+gy+\frac{\ov P}{\ov \h}=const.
\end{equation*}

To conclude,  we observe that $\psi$ and $\ov\psi$ are solutions of the  semilinear Dirichlet problems
\begin{subequations}\label{eq:PP}
\begin{equation}\label{eq:psiDP}
\left\{
\begin{array}{rllll}
\Delta \psi&=&\gamma(-\psi)&\text{in}\ \0_\eta^b,\\
\psi&=&0&\text{on}\ y=\eta(x),\\
\psi&=&m&\text{on} \ y=-1,
\end{array}
\right.
\qquad\text{and}\qquad
\left\{
\begin{array}{rllll}
\Delta \ov\psi&=&\ov\gamma(-\ov \psi)&\text{in}\ \0_{ \eta}^t,\\
\ov\psi&=&0&\text{on}\ y=\eta(x),\\
\ov\psi&=&\ov m&\text{on} \ y=1,
\end{array}
\right.
\end{equation}
respectively,  and they are coupled by the following equation
\begin{equation}\label{eq:CR}
\ov\h |\nabla \ov \psi|^2-\h|\nabla  \psi|^2+2 g (\ov \h- \h )y+2\sigma\eta''/(1+\eta'^2)^{3/2}=Q\qquad\text{on} \ y=\eta(x)
\end{equation}
for some constant $Q\in\R.$
If we integrate this last equation \eqref{eq:CR} over the unit circle, we obtain, by taking into account that $\eta''/(1+\eta'^2)^{3/2}=(\eta'/(1+\eta'^2)^{1/2})'$, that
\begin{equation}\label{eq:CR1}
Q=\int_\s(\ov\h |\nabla \ov \psi|^2(x,\eta(x))-\h|\nabla  \psi|^2(x,\eta(x))+2 g (\ov \h- \h )\eta(x))\, dx.
\end{equation}
\end{subequations}
The integral over $\s$ is normalised such that $\int_\s1\, dx=1.$ 
It is not difficult to check that given two vorticity functions $\gamma$ and $\ov\gamma$ and a classical solution $(\eta,\psi,\ov\psi)$ of \eqref{eq:PP}, then we can associate to this solution  a unique solution of problem
\eqref{eq:2}, even if the condition \eqref{eq:cond1} is not satisfied.
This formulation of problem \eqref{eq:2} is much more convenient  because we deal with two coupled  semilinear elliptic problems.If $\gamma$ and $\ov \gamma$ are such that we may solve \eqref{eq:psiDP} for given $\eta$,
 which is definitely the case when the fluids have constant vorticity, then \eqref{eq:PP} reduces to the problem of determining $\eta.$
As in \cite{W}, we remark that the solutions of \eqref{eq:PP} are solutions of \eqref{eq:2} for any value $c\in(0,\infty) $ of the wave speed.

As a first result, we use in the pure gravity case $\sigma=0$ a theorem on the regularity  of free interfaces from \cite{KNS},
 which was employed  in \cite{CE3, DH2, DH4, M1} to establish analyticity  of traveling water waves without stagnation points on the profile in different wave  regimes,
 to prove that internal traveling waves
with real-analytic vorticity  functions are a priori real-analytic.
 When we incorporate surface tension effects, we provide a new idea, which is based on parabolic theory, and show the real-analyticity without excluding the  existence of stagnation points.
 
\begin{thm}\label{MT:0} Let  $\sigma\in[0,\infty)$ and assume that $\gamma$ and $\ov\gamma$ are constant functions. 
Given a solution  $(\eta,\psi,\ov\psi)$  of \eqref{eq:PP} within the class \eqref{regularity}, we assume,
when $\sigma=0$,  that 
\begin{equation}\label{eq:CD}
 |\nabla \psi|^2+|\nabla\ov\psi|^2>0 \qquad\text{on $y=\eta(x)$}.
\end{equation}
Then, the wave profile $\eta$ is  real-analytic $\eta\in C^\omega(\s).$
\end{thm}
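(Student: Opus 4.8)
The plan is to treat the two regimes separately but reduce both to showing that $\eta$ is real-analytic on a neighbourhood of an arbitrary point $x_0\in\s$. Since $\gamma$ and $\ov\gamma$ are constant, the functions $\psi$ and $\ov\psi$ solve the \emph{linear} Poisson equations $\Delta\psi=\gamma$ and $\Delta\ov\psi=\ov\gamma$ in their respective domains, with real-analytic (constant) right-hand side; hence, by interior elliptic regularity, $\psi$ and $\ov\psi$ are real-analytic in the \emph{open} sets $\0^b_\eta$ and $\0^t_\eta$. The only issue is regularity up to, and of, the interface $y=\eta(x)$.

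\textbf{The pure gravity case $\sigma=0$.} Here I would invoke the Kinderlehrer--Nirenberg--Spruck regularity theory for free boundaries, exactly as cited in \cite{KNS} and used in \cite{CE3, DH2, DH4, M1}. The interface $y=\eta(x)$ with $\eta\in C^{2+\alpha}$ is the common boundary of the two domains; on it, $\psi=\ov\psi=0$, and the coupling relation \eqref{eq:CR} (with $\sigma=0$) reads $\ov\h|\nabla\ov\psi|^2-\h|\nabla\psi|^2+2g(\ov\h-\h)\eta=Q$. First I would flatten the interface locally by the change of variables $(x,y)\mapsto(x,y-\eta(x))$, which transforms $\psi$ and $\ov\psi$ into solutions of uniformly elliptic equations with $C^{1+\alpha}$ coefficients in a fixed half-cylinder; the Dirichlet condition $\psi=\ov\psi=0$ plus Schauder boundary estimates then gives $\psi,\ov\psi\in C^{2+\alpha}$ up to the flattened interface, so in particular $\nabla\psi$ and $\nabla\ov\psi$ extend continuously to $y=\eta(x)$. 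At this point condition \eqref{eq:CD} guarantees that $(\nabla\psi,\nabla\ov\psi)\neq(0,0)$ on the interface, so \eqref{eq:CR} is a \emph{nondegenerate} nonlinear relation coupling the Cauchy data of the two harmonic-type functions; this is precisely the structure to which the \cite{KNS} machinery (real-analytic hodograph/Legendre transform combined with analyticity of solutions to analytic elliptic systems) applies, yielding $\eta\in C^\omega(\s)$.

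\textbf{The capillary(-gravity) case $\sigma>0$.} Here the new idea, based on parabolic theory, is needed because \eqref{eq:CR} now contains the curvature term $2\sigma\eta''/(1+\eta'^2)^{3/2}$, so it is a genuine (quasilinear, second order) equation for $\eta$ rather than an algebraic side condition — and crucially this term lets us bypass assumption \eqref{eq:CD}. The strategy is: flatten the interface as above and observe that $\psi$, $\ov\psi$ solve constant-coefficient-type linear elliptic problems whose data depend analytically on $\eta$ and its first two derivatives; the traces $|\nabla\psi|^2|_{y=\eta}$ and $|\nabla\ov\psi|^2|_{y=\eta}$ are therefore, via the solution operators, analytic functions of $\eta\in C^{2+\alpha}(\s)$ in the Fréchet sense. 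Substituting into \eqref{eq:CR} exhibits $\eta$ as a solution of a second-order quasilinear equation of the form $\sigma\,\eta''/(1+\eta'^2)^{3/2}=F(\eta)$, where $F$ is a (nonlocal but) real-analytic operator and the principal part is uniformly elliptic (here $\sigma>0$ is essential). The standard route to analyticity of such an equation is a parameter-trick: introduce the translation family $\eta_\lambda(x):=\eta(x+\lambda)$, show it solves a $\lambda$-dependent analytic family of such equations on a fixed function space, and apply the analytic implicit function theorem to conclude that $\lambda\mapsto\eta_\lambda$ is real-analytic into $C^{2+\alpha}(\s)$, whence $\eta\in C^\omega(\s)$. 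Recasting the elliptic equation parabolically — writing it as the stationary solution of an associated analytic quasilinear parabolic problem and invoking analytic-smoothing/maximal-regularity results — gives an efficient way to verify the hypotheses of this trick uniformly.

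\textbf{Main obstacle.} The delicate point in both cases is establishing $C^{2+\alpha}$ regularity of $\psi$ and $\ov\psi$ \emph{up to the interface} together with analytic dependence of their gradient traces on $\eta$, starting only from the a priori class \eqref{regularity}: once the interface is flattened the coefficients are merely $C^{1+\alpha}$, so one must bootstrap carefully (Schauder estimates, then differentiate the flattened equations) before any analyticity argument can even begin. For $\sigma=0$ the additional subtlety is checking that \eqref{eq:CD} propagates to the genuine nondegeneracy condition required by \cite{KNS} at \emph{every} interface point — i.e. that the Cauchy data of $\psi$ and $\ov\psi$ cannot both degenerate simultaneously — and for $\sigma>0$ it is verifying that the nonlocal operator $F$ is genuinely real-analytic between the relevant little-Hölder spaces so that the parameter-trick / parabolic-smoothing argument applies without a smallness assumption.
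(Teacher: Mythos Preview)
Your proposal is essentially correct and follows the same overall architecture as the paper: KNS for $\sigma=0$, a parabolic/parameter trick for $\sigma>0$. Two points are worth sharpening.

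For $\sigma=0$ you reproduce the paper's argument. Note that the ``main obstacle'' you flag---bootstrapping $\psi,\ov\psi$ to $C^{2+\alpha}$ up to the interface---is not actually an obstacle: the regularity class \eqref{regularity} already gives $\nabla\psi=(-v,u-c)\in C^{1+\alpha}(\ov{\0_\eta^b})$, hence $\psi\in C^{2+\alpha}$ up to the boundary, and likewise for $\ov\psi$. The paper then simply observes that, since $\psi=\ov\psi=0$ on the interface, the tangential derivatives vanish and $|\nabla\psi|^2=(\p_\nu\psi)^2$, $|\nabla\ov\psi|^2=(\p_\nu\ov\psi)^2$; the KNS nondegeneracy condition becomes $2(\ov\h(\p_\nu\ov\psi)^2+\h(\p_\nu\psi)^2)\neq0$, which is exactly \eqref{eq:CD}.

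For $\sigma>0$ your sketch and the paper's proof differ in implementation. You propose either the analytic implicit function theorem applied to the translation family, or viewing $\eta$ as a \emph{stationary} solution of a parabolic problem. The paper does neither: it adds $\eta'$ to both sides of the curvature equation to obtain $\eta'=\eta''/(1+\eta'^2)^{3/2}+\varphi(\eta)$, then observes that $\xi(t,x):=\eta(t+x)$ is a \emph{genuine time-dependent} solution of the autonomous quasilinear parabolic problem $\p_t\xi=\Phi(\xi)$, $\xi(0)=\eta$, with $\Phi(\xi)=\p_x^2\xi/(1+(\p_x\xi)^2)^{3/2}+\varphi(\xi)$ real-analytic between little H\"older spaces. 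Lunardi's analytic smoothing (time-analyticity of solutions to analytic parabolic problems) then gives $t\mapsto\xi(t)\in C^\omega((0,\infty),h^{2+\alpha}(\s))$, hence $\eta\in C^\omega(\s)$. This avoids having to check invertibility of any linearisation, which your implicit-function-theorem route would require; on the other hand your route is more standard and would also work. The key structural point you have right is that translation invariance of the problem plus analyticity of the solution operators $\eta\mapsto(\psi,\ov\psi)$ is what drives the argument.
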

\begin{remark} It is clear from the proof of Theorem \ref{MT:0} that its assertion is true in the case  $\sigma=0$ for arbitrary analytic vorticity functions $\gamma, \ov\gamma,$ provided \eqref{eq:CD}
  is fulfilled.
  When $\sigma>0,$ and $\gamma,\ov\gamma\in C^\omega(\R)$ are bounded and satisfy additionally
  \begin{equation}\label{eq:adas}
\gamma'\leq0\qquad\text{and}\qquad\ov \gamma'\leq0,
\end{equation}
then the claim of Theorem \ref{MT:0} is still true. 
Relation \eqref{eq:adas} ensures the unique solvability of the Dirichlet problems \eqref{eq:psiDP}, see Theorem 3.3 in \cite{DB}, which permits us to express the problem \eqref{eq:PP} by the equation \eqref{OE}.
Our method may be used  to prove analyticity of the profile for traveling water waves  with  stagnation points in the capillary and capillary-gravity  regime, 
 provided the vorticity function is non-increasing, cf. \cite{DH4, Hen2}.
\end{remark}
\begin{proof} Assuming  $\sigma=0$ first, we  re-write the coupling equation \eqref{eq:CR} in a different way.
Since the boundary conditions on $y=\eta(x)$ of both problems \eqref{eq:psiDP} imply that the
 tangential component of  $\nabla \psi$ and $\nabla \ov\psi$ vanish at the wave profile, this implies
\[
|\nabla \psi|^2=(\p_\nu\psi)^2\qquad\text{and}\qquad |\nabla \ov\psi|^2=(\p_\nu\ov\psi)^2\qquad\text{on $y=\eta(x)$}.
\]
Here, $\nu:=(-\eta',1)/(1+\eta'^2)^{1/2}$  is the unit normal at $y=\eta(x).$
With this notation, equation \eqref{eq:CR} is equivalent to the following relation
\[
f(x,y,\p_\nu\ov\psi,\p_\nu\psi)=\ov \h(\p_\nu\ov\psi)^2-\h(\p_\nu\psi)^2+2g(\ov \h-\h)y-Q=0\qquad\text{on $y=\eta(x)$}.
\]
Clearly, $f$ is real-analytic in all its arguments.
Furthermore, our assumption \eqref{eq:CD} implies that
\begin{align*}
f_{\p_\nu\ov\psi}\p_\nu\ov\psi-f_{\p_\nu\psi}\p_\nu\psi=2(\ov \h(\p_\nu\ov\psi)^2+\h(\p_\nu\psi)^2)\neq0
\end{align*}
on $y=\eta(x),$ which shows that the assumptions of Theorem 3.2  in \cite{KNS} (see also the Remark following it) are all satisfied and the claim follows at once.

The proof in the case when $\sigma\neq0$ is different. 
Indeed, we can  reduce the problem \eqref{eq:PP} (see e.g. Section \ref{S:2}) to an operator equation
\begin{equation}\label{OE}
\frac{\eta''}{(1+\eta'^2)^{3/2}}+\phi(\eta)=0,
\end{equation}
where setting 
\begin{equation}
\label{U}\U:=\{\eta\in C^{2+\alpha}(\s)\,:\, |\eta|<1\},
\end{equation} 
we have that $\phi:\U\to C^{1+\alpha}(\s)$ is a real-analytic operator with $\phi(C^\infty(\s))\subset C^\infty(\s).$
Because we use parabolic theory later on in the proof, we introduce the small H\"older spaces $h^{k+\alpha}(\s), $ $k\in\N,$  as the closure of the smooth functions $C^\infty(\s)$  in $C^{k+\alpha}(\s).$
We define now $D:=\U\cap h^{2+\alpha}(\s)$, and observe that \eqref{OE} may be written as
\begin{equation}\label{OE1}
\eta'=\frac{\eta''}{(1+\eta'^2)^{3/2}}+\varphi(\eta),
\end{equation}
with $\varphi\in C^\omega(D, h^{1+\alpha}(\s))$ given by $\varphi(\eta):=\eta'+\phi(\eta),$ $\eta\in D.$
Indeed,  one can easily see from \eqref{OE1} that $\eta\in C^\infty(\s),$ which implies $\eta\in D.$

Let now $\xi:\R\to h^{2+\alpha}(\s)$ be the function defined by $\xi(t,x)=\eta(t+x).$
If we remark that $\varphi(\eta(t+\cdot))=\varphi(\eta)(t+\cdot)$ for all $t\in\R $, which is a direct consequence of the unique solvability of \eqref{eq:psiDP} and of the fact 
that the variable $x$ does not interfere into \eqref{eq:psiDP},    we find
\begin{align*}
\p_t\xi(t,x)=&\p_x\eta(t+x)=\frac{\p_x^2\xi}{(1+(\p_x\xi)^2)^{1/2}}(t,x)+\varphi(\eta)(t+x)\\
=&\frac{\p_x^2\xi}{(1+(\p_x\xi)^2)^{1/2}}(t,x)+\varphi(\eta(t+\cdot))(x)=\frac{\p_x^2\xi}{(1+(\p_x\xi)^2)^{1/2}}(t,x)+{\varphi}(\xi(t))(x).
\end{align*} 
Whence,  $\xi$ is a solution of the autonomous  problem 
\begin{equation}\label{CP}
\p_t\xi=\Phi(\xi), \quad t>0,\qquad \xi(0)=\eta, 
\end{equation}
where $\Phi\in C^\omega(D, h^{\alpha}(\s))$ is the mapping
\[
\Phi(\xi)=\frac{\p_x^2\xi}{(1+(\p_x\xi)^2)^{1/2}}+{\varphi}(\xi), \qquad \xi\in D.
\]
Given $\xi\in D,$ we have  $\p{\Phi(\xi)}[\zeta]=(1+(\p_x\xi)^2)^{-1/2}\p_x^2\zeta+\bA\zeta$ with $\bA\in\mathcal{L}(h^{2+\alpha}(\s), h^{1+\alpha}(\s)).$ 
{Using well known interpolation properties of the small H\"older spaces:
\[
 (h^{\sigma_0}(\s), h^{\sigma_1}(\s))_\theta=h^{(1-\theta)\sigma_0+\theta\sigma_1}(\s)
\]
if $\theta \in (0,1)$ and $(1-\theta)\sigma_0+\theta \sigma_1\in \R^+ \setminus \N,$ where $(\cdot,\cdot)_\theta$  denotes the continuous interpolation method of DaPrato and Grisvard \cite{DG} ( see also \cite{A,L} ),
we obtain from Corollary 3.1.9,  Proposition 2.2.7,   and Proposition 2.4.1   in \cite{L} that the Fr\'echet derivative   $\p{\Phi(\xi)}$ 
is the generator of an analytic semigroup in $\mathcal{L}(h^\alpha(\s)).$}
Corollary 8.4.6 in \cite{L} {ensures} that the unique solution of \eqref{CP} to the initial data $\xi(0)=\eta$ is analytic, that is  $\xi\in C^\omega((0,\infty), h^{2+\alpha}(\s)),$ 
which implies the desired assertion.
\end{proof}

\section{Bifurcation analysis and the main result}\label{S:2}
In the remainder of this paper  we restrict our analysis to the stable regime when the fluid in $\0_\eta^b$ is more dense than that located above, that is $\h>\ov\h $,
and the fluids have constant  vorticity  
\begin{equation}\label{CV}
\omega=\gamma\in\R\qquad\text{and}\qquad\ov\omega=\ov\gamma\in\R.
 \end{equation}
Furthermore, the surface tension coefficient  may take any value $\sigma\in[0,\infty).$

As a first step, we introduce  a constant $\lambda$ into the problem which will allow us  to describe the   laminar flow solutions of \eqref{eq:PP}, i.e. solutions with a flat wave profile, located at  $y=0$.
Later on, we use this constant  as a bifurcation parameter to find  non-flat solutions of \eqref{eq:PP}.  
To this end,  when $\eta=0,$ we observe that the functions  $(\psi,\ov \psi)=:(\psi_0,\ov\psi_0)$ solving 
\eqref{eq:psiDP} are given by
\begin{equation}\label{lf}
\psi_0(x,y):=\frac{\omega y^2}{2} \ \ \text{in $\0_0^b$}\qquad\text{and}\qquad\ov\psi_0(x,y):=\frac{\ov \omega y^2}{2}+\lambda y\ \ \text{in $\0_0^t$},
\end{equation}
provided the constants $m$ and $\ov m$ satisfy
\begin{equation}\label{eq:conm}
m=\omega/2\qquad\text{and}\qquad \ov m=\lambda+\ov\omega/2.
\end{equation}
Hence, given $\lambda\in\R,$ the tuple $(\eta,\psi,\ov\psi):=(0,\psi_0,\ov\psi_0)$ is a  solutions of \eqref{eq:PP}
 if the constants $m,\ov m,$ and $Q$ are given by \eqref{eq:conm} and \eqref{eq:CR1}.

In order to determine  non-flat solutions of \eqref{eq:PP} we re-write problem \eqref{eq:PP} as a nonlinear and nonlocal operator equation having $(\lambda,\eta)$ as unknowns.
Because the domains where the Dirichlet problems \eqref{eq:psiDP} are posed depend upon $\eta$, we  need to transform these problems and the equation \eqref{eq:CR} on fixed reference manifolds.
Therefore, we define the functions
$\phi_{ \eta}:\Omega^b_0\to\Omega_\eta^b$  and $\ov\phi_{ \eta}:\Omega^t_0\to\Omega_\eta^t$ by the relations
\[
\phi_{\eta}(x,y):=(x,y+(1+y)\eta(x))\ \ \text{in $\0_0^b$}\qquad\text{and}\qquad\ov\phi_{\eta}(x,y):=(x,y+(1-y)\eta(x))\ \ \text{in $\0_0^t$},
\]
and observe that if $\eta$ belongs to $\U$ (see relation \eqref{U}), then $\phi_{ \eta}$  and $\ov\phi_{ \eta}$ are  diffeomorphisms.
We use  these diffeomorphisms to transform the Laplace operator into a differential operator on the fixed domains  $\0_0^b$ and $\0_0^t,$  respectively.
More precisely, setting 
\begin{align*}
&\A(\eta)w:=\big(\Delta(w\circ\phi_\eta^{-1})\big)\circ\phi_\eta,\ \ w\in C^{2+\alpha}\big(\ov{\0_0^b}\big),\\
&\ov\A(\eta){\ov w}:=\big(\Delta(\ov w\circ\ov\phi_\eta^{-1})\big)\circ\ov\phi_\eta,
\ \ \ov w\in C^{2+\alpha}\big(\ov{\0_0^t}\big),
\end{align*}
 we find the following  expressions for the differential operators $\A(\eta) $ and $\ov\A(\eta):$
\begin{align*}
\A(\eta)&=\p^2_{xx}-2\frac{(1+ y)\eta'}{1+ \eta}\p^2_{xy}+
\left(\frac{(1+ y)^2\eta'^2}{(1+ \eta)^2}+\frac{1}{(1+ \eta)^2}\right)\p^2_{yy}-(1+ y)\frac{(1+ \eta)\eta''- 2\eta'^2}{(1+ \eta)^2}\p_y,\\
\ov \A(\eta)&=\p^2_{xx}-2\frac{(1- y)\eta'}{1- \eta}\p^2_{xy}+
\left(\frac{(1- y)^2\eta'^2}{(1- \eta)^2}+\frac{1}{(1- \eta)^2}\right)\p^2_{yy}-(1- y)\frac{(1- \eta)\eta''+ 2\eta'^2}{(1- \eta)^2}\p_y
\end{align*}
for all $ \eta\in\U.$
>From these explicit expressions we can easily see  that the functions $\eta\mapsto\A(\eta)$ and
 $\eta\mapsto\ov\A(\eta)$ are both real-analytic.
Furthermore, corresponding to the coupling condition \eqref{eq:CR} we define the boundary operators
$\B:\U\times C^{2+\alpha}\big(\ov{\0_0^b}\big) \to C^{1+\alpha}(\s)$ and $\ov \B:\U\times C^{2+\alpha}\big(\ov{\0_0^t}\big) \to C^{1+\alpha}(\s)$ by the relations
\[
\B(\eta,w):=\tr \big(|\nabla (w\circ\phi_\eta^{-1})|^2\circ\phi_\eta\big)\qquad\text{and}\qquad \ov\B(\eta,w):=\tr\big( |\nabla (\ov w\circ\ov\phi_\eta^{-1})|^2\circ\ov\phi_\eta\big),
\]
with $\tr $ being the trace operator with respect to the line $\s\times\{0\}\cong \s.$
For these operators we find
\begin{align*}
\B(\eta,w)&:=\tr (\p_x w )^2-\frac{2\eta'}{1+\eta}\tr \p_x w \tr \p_y w +\frac{1+\eta'^2}{(1+\eta)^2}\tr( \p_y w)^2, \qquad (\eta,w)\in \U\times C^{2+\alpha}\big(\ov{\0_0^b}\big),\\
\ov \B(\eta,\ov w)&:=\tr (\p_x\ov w )^2-\frac{2\eta'}{1-\eta}\tr \p_x \ov w \tr \p_y\ov w +\frac{1+\eta'^2}{(1-\eta)^2}\tr( \p_y\ov w)^2,\qquad (\eta,\ov w)\in \U\times C^{2+\alpha}\big(\ov{\0_0^t}\big),
\end{align*}
which shows that $\B$ and $\ov B$ also depend   analytically on their variables.

To conclude, we observe that if $\psi$ and $\ov\psi$ are the solutions of the problems \eqref{eq:psiDP} (for some $\eta\in\U$) when   $\omega, \ov\omega$  are constant and the constants $m, \ov m$ are given by \eqref{eq:conm}, 
then $w:=\psi\circ\phi_\eta=:\T(\eta)$ and $\ov w:=\ov \psi\circ\ov\phi_\eta=:\ov\T(\lambda,\eta)$ are the solutions 
the solutions of the Dirichlet problems
\begin{equation}\label{eq:DPs}
\left\{
\begin{array}{rllll}
\A(\eta) w&=&\omega&\text{in}&\0^b_0,\\
w&=&0&\text{on}&y=0,\\
w&=&\omega/2&\text{on} &y=-1
\end{array}
\right.
\qquad
\text{and}
\qquad
\left\{
\begin{array}{rllll}
\ov \A(\eta) \ov w&=&\ov\omega&\text{in}&\0^t_0,\\
\ov w&=&0&\text{on}&y=0,\\
\ov w&=&\lambda+\ov \omega/2&\text{on} &y=1,
\end{array}
\right.
\end{equation}
respectively.
Since, $\A$ and $\ov\A$ are real-analytic, and the right-hand side of the equations \eqref{eq:DPs} depends analytically on $\lambda,$ we obtain that $\T$ and $\ov\T$ are also real-analytic.
{For the proof we refer to Lemmas 2.2 and 2.3. in \cite{ES}}.

With this notation, finding the solutions $(\eta,\psi,\ov\psi)$ of the coupled problem  \eqref{eq:PP} when the fluids have constant vorticities and the constants
$m, \ov m, Q$ are given by relations \eqref{eq:conm}, \eqref{eq:CR1}, reduces to determining the solutions  $(\lambda,\eta)$ of the nonlocal and nonlinear equation 
\begin{equation}\label{NE}
\Psi(\lambda,\eta)=0,
\end{equation}
where $\Psi:\R\times\U\to C^{\sigma'+\alpha}(\s)$  is given by
 \begin{align*}
\Psi(\lambda,\eta):=&\ov \h\ov \B(\eta, \ov\T(\lambda,\eta))-\h\B(\eta, \T(\eta))+2g(\ov \h-\h)\eta+2\sigma\eta''/(1+\eta'^2)^{3/2}\\
&-\int_\s(\ov\h \ov \B(\eta, \ov\T(\lambda,\eta))-\h\B(\eta, \T(\eta))+2 g (\ov \h- \h )\eta(x))\, dx,\qquad (\lambda,\eta)\in \R\times\U,
\end{align*}
with $\sigma'=0$ if $\sigma>0$ and $\sigma'=1$ for $\sigma=0.$

We note  that the laminar flow solutions found at the beginning of the section  are in correspondence with the trivial solutions $(\lambda,\eta)=(\lambda,0)$
of \eqref{NE}.
By applying the bifurcation theorem from simple eigenvalues due to Crandall and Rabinowitz \cite{CR} to equation \eqref{NE} 
we  show in Theorem \ref{MT:1} that infinitely many analytic branches consisting  of non-flat solutions of \eqref{eq:PP} 
intersect this trivial set of solutions of \eqref{NE}.
To this end,  we restrict first the domain and range of $\Psi$.
This is done by introducing the subspaces   $C^{m+\alpha}_{k,0,ev}(\s)$, $k, m\in\N,$  of $C^{m+\alpha}(\s)$ which contain only even, $2\pi/k$ periodic functions with integral mean zero.
Choosing functions with integral mean zero corresponds to a choice of equal volumes of both liquids beneath the rigid lid, meaning that,  at rest, the flat wave profile is located at $y=0.$ 
We use next the intrinsic properties of \eqref{eq:PP} to prove the following lemma.
\begin{lemma}\label{L:1} Let $\sigma\in [0,\infty)$ and  $\U_{k,0,ev}:=\U \cap C^{2+\alpha}_{k,0,ev}(\s).$ 
The operator $\Psi$ is real-analytic
\[
\Psi\in C^\omega(\R\times \U_{k,0,ev}, C^{\sigma'+\alpha}_{k,0,ev}(\s)),
\]
and 
\begin{align*}
\Psi(\lambda,\eta)=&\ov \h\ov \B(\eta, \ov\T(\lambda,\eta))-\h\B(\eta, \T(\eta))+2g(\ov \h-\h)\eta+\frac{2 \sigma\eta''}{(1+\eta'^2)^{3/2}}\\
& -\int_\s(\ov\h \ov \B(\eta, \ov\T(\lambda,\eta))-\h\B(\eta, \T(\eta)))\, dx
\end{align*}
for all $ (\lambda,\eta)\in \R\times \U_{k,0,ev}.$
\end{lemma}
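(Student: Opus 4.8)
The plan is to establish Lemma \ref{L:1} in three moves: first the analyticity of $\Psi$ on the restricted domain, then the invariance of the target space under the structural symmetries of the problem, and finally the simplification of the integral term.

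\textbf{Analyticity.} We have already remarked that $\eta\mapsto \A(\eta)$, $\eta\mapsto\ov\A(\eta)$, $\B$, and $\ov\B$ are real-analytic, and that $\T$ and $\ov\T$ are real-analytic (by Lemmas 2.2 and 2.3 in \cite{ES}). The map $\eta\mapsto 2\sigma\eta''/(1+\eta'^2)^{3/2}$ is real-analytic from $\U$ into $C^\alpha(\s)$ (for $\sigma>0$) or is simply the continuous linear map $\eta\mapsto 2g(\ov\h-\h)\eta$ composed with nothing, plus we are using $\sigma'=1$ when $\sigma=0$ so no second-derivative term appears in the range regularity count. The integral functional $\eta\mapsto\int_\s(\cdots)\,dx$ is a composition of the analytic boundary operators with the bounded linear averaging functional, hence analytic into the constants. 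Therefore $\Psi$, being a finite sum and composition of real-analytic maps, is real-analytic from $\R\times\U$ into $C^{\sigma'+\alpha}(\s)$; restricting to the closed subspace $\U_{k,0,ev}$ preserves analyticity.

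\textbf{Invariance of the target space.} This is the point that requires the intrinsic structure of \eqref{eq:PP}. One must check that if $\eta\in\U_{k,0,ev}$ then $\Psi(\lambda,\eta)\in C^{\sigma'+\alpha}_{k,0,ev}(\s)$, i.e. $\Psi(\lambda,\eta)$ is even, $2\pi/k$-periodic, and has integral mean zero. The mean-zero property is immediate by construction: the last line of $\Psi$ subtracts off exactly the average of the first line (the curvature term $2\sigma\eta''/(1+\eta'^2)^{3/2}=\big(2\sigma\eta'/(1+\eta'^2)^{1/2}\big)'$ and the term $2g(\ov\h-\h)\eta$ both integrate to zero over $\s$ when $\eta$ has mean zero, which is why the integrand in the subtracted term may be written without the curvature contribution and, after noting $\int_\s\eta\,dx=0$, without the $\eta$ contribution either — this is precisely the asserted simplified formula). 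For evenness and $2\pi/k$-periodicity, the key observation is that the Dirichlet problems \eqref{eq:DPs} have $x$-independent data and $x$-independent coefficient structure except through $\eta$; hence if $\eta$ is even and $2\pi/k$-periodic, then so are $\T(\eta)$ and $\ov\T(\lambda,\eta)$ in the $x$-variable (by uniqueness of solutions of \eqref{eq:DPs}, since reflecting $x\mapsto -x$ or translating $x\mapsto x+2\pi/k$ maps a solution to a solution with the same data). Consequently the quadratic gradient expressions $\B(\eta,\T(\eta))$ and $\ov\B(\eta,\ov\T(\lambda,\eta))$, being built from $\tr\p_x$, $\tr\p_y$ of these functions together with $\eta,\eta'$, inherit evenness and $2\pi/k$-periodicity (note $\p_x$ of an even function is odd, but it enters only quadratically or multiplied by $\eta'$ which is also odd, so each summand is even). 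Adding the even term $2g(\ov\h-\h)\eta$ and the even curvature term, and subtracting a constant, keeps the result even and $2\pi/k$-periodic.

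\textbf{The simplified formula.} Given mean-zero $\eta$, we have $\int_\s 2g(\ov\h-\h)\eta\,dx=0$ and $\int_\s 2\sigma\eta''/(1+\eta'^2)^{3/2}\,dx=0$ as noted above, so the subtracted integral in the original definition of $\Psi$ reduces to $\int_\s(\ov\h\ov\B(\eta,\ov\T(\lambda,\eta))-\h\B(\eta,\T(\eta)))\,dx$, which is exactly the displayed formula in the lemma.

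\textbf{Main obstacle.} The only genuinely non-routine step is the invariance of the symmetry classes under $\T$ and $\ov\T$; everything else is bookkeeping. This rests on the uniqueness of solutions to the transformed Dirichlet problems \eqref{eq:DPs} together with the equivariance of $\A(\eta),\ov\A(\eta)$ under the reflection and translation actions on $x$ when $\eta$ itself is equivariant — a fact one reads off from the explicit formulas for $\A(\eta)$ and $\ov\A(\eta)$, where $x$ enters only through $\eta(x)$, $\eta'(x)$, $\eta''(x)$ and the differential operators $\p_x,\p_x^2,\p_x\p_y$.
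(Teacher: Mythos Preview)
Your proof is correct and follows essentially the same approach as the paper's: analyticity from composition of the analytic building blocks $\T,\ov\T,\B,\ov\B$, symmetry invariance via uniqueness of the Dirichlet problems \eqref{eq:DPs} under reflection and translation (the paper invokes the weak elliptic maximum principle for this uniqueness), and the mean-zero simplification. Your treatment is in fact more explicit than the paper's on the parity bookkeeping for $\B,\ov\B$ and on the derivation of the simplified integral term.
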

\begin{proof}
The analyticity of $\Psi$ is  a consequence of the fact that the operators $\T,  \ov\T, \B, \ov \B$ are real-analytic in their variables.

 Let now $(\lambda,\eta)\in\R\times \U_{k,0,ev}$ be given. 
Since $\eta $ is even, we may define the functions $w_1(x,y):=w(-x,y)$ for $(x,y)\in\0_0^b$ and $\ov w_1(x,y):=\ov w(-x,y)$ for $(x,y)\in\0_0^t,$
where $w$ and $\ov w$ denote  the solutions of \eqref{eq:DPs}, respectively.
Since the Dirichlet conditions are constant, it may be easily checked that $w_1$ and $\ov w_1$ 
are solutions of the problems \eqref{eq:DPs}, respectively, so that, by the weak elliptic maximum principle,  $w=w_1$ and $\ov w=\ov w_1$.
 A similar argument shows that $w$ and $\ov w$ are both $2\pi/k$ periodic in $x$ and  we obtain, by using the explicit relations for the boundary operators $\B$ and $\ov \B$, that $\Psi(\lambda,\eta)$ is $2\pi/k$ periodic and even. 
To finish the proof, we note, with our choice of $Q$ in \eqref{eq:CR1},  that  $\Psi(\lambda,\eta)$ has integral mean zero if $\eta$ has this property. 
\end{proof}
 
An important step  in our analysis is to  determine the Fr\'echet derivative of $\Psi $ at the trivial branch of laminar solutions.

\begin{lemma}\label{L:2}
Given $\lambda\in\R,$ the Fr\'echet derivative $\p_\eta\Psi(\lambda,0)\in\mathcal{L}(C^{2+\alpha}_{k,0,ev}(\s), C^{\sigma'+\alpha}_{k,0,ev}(\s))$ is a Fourier multiplier
\begin{equation}\label{FD}
\p_\eta\Psi(\lambda,0)\sum_{k=1}^\infty a_k\cos(kx)=\sum_{k=1}^\infty\mu_k(\lambda) a_k\cos(kx)
\end{equation}
with symbol 
\begin{align}\label{FDS}
\mu_k(\lambda)&:=2\left[g(\ov\h-\h)-\sigma k^2+\ov \omega\hspace{0.5mm}\ov\h\lambda+\ov\h\frac{k}{\tanh(k)}\lambda^2\right],\qquad k\geq1.
\end{align}
\end{lemma}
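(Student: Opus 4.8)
The plan is to compute $\p_\eta\Psi(\lambda,0)$ by differentiating $\Psi$ term by term at the laminar solution $\eta=0$ and then evaluating the resulting bounded linear operator on each basis function $\cos(kx)$, $k\geq1$. If the image turns out to be $\mu_k(\lambda)\cos(kx)$ with $\mu_k$ as in \eqref{FDS}, then linearity and continuity of $\p_\eta\Psi(\lambda,0)$ on $C^{2+\alpha}_{k,0,ev}(\s)$ force the claimed Fourier-multiplier representation. Three of the contributions are immediate: the capillary term $2\sigma\eta''/(1+\eta'^2)^{3/2}$ differentiates at $\eta=0$ to $2\sigma\p_x^2$, hence contributes $-2\sigma k^2\cos(kx)$; the gravity term $2g(\ov\h-\h)\eta$ contributes $2g(\ov\h-\h)\cos(kx)$; and the integral correction term, being a constant, contributes nothing to the $\cos(kx)$-mode for $k\geq1$. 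It remains to linearize $\ov\h\ov\B(\eta,\ov\T(\lambda,\eta))$ and $\h\B(\eta,\T(\eta))$.

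I would first dispose of the bottom layer. At $\eta=0$ the diffeomorphism $\phi_0$ is the identity, so $w_0:=\T(0)=\psi_0$ with $\psi_0(x,y)=\omega y^2/2$; in particular $\tr\p_xw_0=\tr\p_yw_0=0$ on $y=0$. Writing $\p_\eta[\B(\cdot,\T(\cdot))](0)[\zeta]=\p_\eta\B(0,w_0)[\zeta]+\p_w\B(0,w_0)\big[\p_\eta\T(0)[\zeta]\big]$ and inspecting the explicit formula for $\B$, every term on the right is a product containing a factor $\tr\p_xw_0$ or $\tr\p_yw_0$, hence vanishes. So the bottom layer contributes nothing, consistently with the absence of $\h$ and $\omega$ in \eqref{FDS}.

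The substantive part is the top layer, where $\ov w_0:=\ov\T(\lambda,0)=\ov\psi_0$ with $\ov\psi_0(x,y)=\ov\omega y^2/2+\lambda y$, so now $\tr\p_y\ov w_0=\lambda$ on $y=0$ while still $\tr\p_x\ov w_0=0$. Differentiating the Dirichlet problem \eqref{eq:DPs} for $\ov\T$ at $\eta=0$ in the direction $\zeta=\cos(kx)$, using $\ov\A(0)=\Delta$ and computing from the explicit formula for $\ov\A(\eta)$ that $\p_\eta\ov\A(0)[\zeta]=-2(1-y)\zeta'\p^2_{xy}+2\zeta\p^2_{yy}-(1-y)\zeta''\p_y$, I find that $\dot{\ov w}:=\p_\eta\ov\T(\lambda,0)[\cos(kx)]$ is the unique solution of
\begin{equation*}
\Delta\dot{\ov w}=-\cos(kx)\big[2\ov\omega+k^2(1-y)(\ov\omega y+\lambda)\big]\ \ \text{in }\0_0^t,\qquad \dot{\ov w}=0\ \ \text{on }\ y=0,1.
\end{equation*}
Separating variables, $\dot{\ov w}(x,y)=\cos(kx)h(y)$ with $h''-k^2h=-[2\ov\omega+k^2(1-y)(\ov\omega y+\lambda)]$ and $h(0)=h(1)=0$; one checks that $h_p(y):=(1-y)(\lambda+\ov\omega y)$ is a particular solution, so $h(y)=(1-y)(\lambda+\ov\omega y)-\lambda\cosh(ky)+\lambda\coth(k)\sinh(ky)$ and, in particular, $h'(0)=\ov\omega-\lambda+\lambda k\coth(k)$. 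Feeding $\dot{\ov w}$ into the chain rule for $\ov\B$ — where, since $\tr\p_x\ov w_0=0$, the only surviving contributions are $\p_\eta\ov\B(0,\ov w_0)[\zeta]=2\lambda^2\zeta$ and $\p_w\ov\B(0,\ov w_0)[\dot{\ov w}]=2\lambda\,\tr\p_y\dot{\ov w}=2\lambda h'(0)\cos(kx)$ — and multiplying by $\ov\h$, I get $\ov\h\big(2\lambda^2+2\lambda h'(0)\big)\cos(kx)=2\ov\h\big(\ov\omega\lambda+k\coth(k)\lambda^2\big)\cos(kx)$. Adding the gravity and capillary contributions and using $k\coth(k)=k/\tanh(k)$ yields precisely $\mu_k(\lambda)\cos(kx)$ with $\mu_k$ as in \eqref{FDS}.

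The main obstacle I expect is the middle of this last step: correctly linearizing the $\eta$-dependent coefficients of $\ov\A(\eta)$ and of $\ov\B(\eta,\cdot)$, assembling the separated ODE for $h$, and spotting the polynomial particular solution $h_p$; once $h'(0)$ is in hand the rest is bookkeeping. A minor point to address is the justification that $\p_\eta\ov\T(\lambda,0)$ is obtained by differentiating \eqref{eq:DPs}, which follows from the real-analyticity (hence Fréchet differentiability) of $\ov\T$ recorded above together with the unique solvability of the linearized Dirichlet problem by standard elliptic theory.
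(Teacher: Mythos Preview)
Your proof is correct and follows essentially the same approach as the paper: both linearize the boundary operators $\B,\ov\B$ at $(0,\psi_0)$ and $(0,\ov\psi_0)$, differentiate the Dirichlet problem \eqref{eq:DPs} for $\ov\T$ to obtain the linear problem for $\p_\eta\ov\T(\lambda,0)[\cos(kx)]$, solve it by separation of variables, and read off $\mu_k(\lambda)$ from the trace $\tr\p_y$. Your solution $h(y)$ coincides with the paper's $\ov w_k(y)$ (your polynomial particular solution $h_p(y)=(1-y)(\lambda+\ov\omega y)$ is exactly the non-hyperbolic part $\ov\omega(y-y^2)+\lambda(1-y)$ there), and the remaining bookkeeping matches.
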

\begin{proof} Taking into account that $\T(0)=\psi_0$ and $\ov\T(\lambda,0)=\ov\psi_0$ for all $\lambda\in\R$, we  obtain from
\begin{equation}\label{firstorder}
\begin{array}{lllllll}
&\p_\eta\B(0,\psi_0)=0,  \qquad &\p_w\B(0,\psi_0)=0,\\
&\p_\eta\ov\B(0,\ov\psi_0)[\eta]=2\lambda^2\eta,   \qquad &\p_{\ov w}\ov \B(0,\ov\psi_0)[\ov w]=2\lambda \tr \p_y \ov w, 
\end{array}
\end{equation}
and the chain rule that
\begin{align}\label{eq:FDs}
\p_\eta\Psi(\lambda,0)[\eta]=&2\ov\h\lambda\left( \tr( \p_y \p_\eta\ov\T(\lambda,0)[\eta])-\int_\s\tr( \p_y \p_\eta\ov\T(\lambda,0)[\eta])\, dx \right)\\
&+[2g(\ov\h-\h)+2\lambda^2\ov \h]\eta+2\sigma\eta''
\end{align}
for all $\eta\in C^{2+\alpha}_{k,0,ev}(\s).$
We determine now the Fr\'echet  derivative $\ov w:=\p_\eta\ov\T(\lambda,0)[\eta].$
Therefore, we differentiate  the equations of the Dirichlet problem \eqref{eq:DPs} solved by  $\ov\T(\lambda,\eta)$ with respect to $\eta$  at $\eta=0$,  
and find that $\ov w$ is the solution of the problem
\begin{equation}\label{eq:LDP}
\Delta \ov w=-\p\ov \A(0)[\eta]\ov\psi_0 \quad \text{in}\ \0^t_0,\qquad \ov w=0 \quad \text{on}\ \p\0^t_0,
\end{equation}
where, by the explicit expressions found at the beginning of the section
\[
\p\ov\A(0)[\eta]\ov\psi_0=2\ov\omega \eta-(1-y)(\ov\omega y+\lambda)\eta''.
\]
In order to determine  relation \eqref{FD} we use Fourier expansions for functions in  $C^{2+\alpha}_{k,0,ev}(\s)$ and make a similar ansatz for the solution $\ov w:=\p_\eta\ov\T(\lambda,0)[\eta]$ of \eqref{eq:LDP}:
\[
\eta=\sum_{k=1}^\infty a_k\cos(kx)\quad\text{and}\quad \ov w=\sum_{k=1}^\infty a_k \ov w_k(y)\cos(kx).
\]
The right-hand side of the first equation of \eqref{eq:LDP} may be then expanded as follows
\[
-\p\ov\A(0)[\eta]\psi_\ap=\sum_{k=1}^\infty a_k b_k(y)\cos(kx)\qquad\text{with}\qquad b_k(y):=-2\ov\omega-(1-y)(\ov\omega y+\lambda)k^2,
\]
and, plugging all these expansions into \eqref{eq:LDP} and matching the coefficients corresponding to $\cos(kx),$ yields that $\ov w_k$ solves
\begin{equation}\label{eq:oe}
\ov w_k''-k^2\ov w_k=b_k,\quad 0<y<1,\qquad \text{and}\qquad\ov w_k(0)=\ov w_k(1)=0.
\end{equation}
The solution of \eqref{eq:oe} is given by
\begin{align}\label{w_k}
\ov w_k(y)=\frac{\lambda}{\tanh(k)}\sinh(ky)-\lambda\cosh(ky)+\ov\omega(y-y^2)+\lambda(1-y),
\end{align}
and together with  \eqref{eq:FDs} we obtain the desired relations \eqref{FD} and \eqref{FDS}.
\end{proof}

We recall now a theorem which provides a sufficient condition for an operator in order to be a Fourier multipliers between H\"older spaces of  periodic functions. 
\begin{lemma}[{\cite[Theorem 3.4]{EM}} ] \label{UT}
Let $r,s$ be two positive {non-integer} constants and let $(M_p)_{p\in\Z}\subset\mathbb{C}$ be a sequence satisfying the following conditions
\begin{align*}
&(i)\quad\underset{p\in\Z\setminus\{0\}}\sup\,|p|^{r-s}|M_p|<\infty,\\[1ex]
&(ii)\quad\underset{p\in\Z\setminus\{0\}}\sup\,|p|^{r-s+1}|M_{p+1}-M_p|<\infty,\\[1ex]
&(iii)\quad\underset{p\in\Z\setminus\{0\}}\sup\,|p|^{r-s+2}|M_{p+2}-2M_{p+1}+M_p|<\infty.
\end{align*}
Then, the Fourier multiplier satisfies
\[\sum_{p\in\Z}a_pe^{ipx}\longmapsto \sum_{p\in\Z}M_pa_pe^{ipx}\in\mathcal{L}(C^s(\s),C^r(\s))\].
\end{lemma}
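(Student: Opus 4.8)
The plan is to prove Lemma \ref{UT} (the multiplier criterion) by estimating the H\"older seminorm of the convolution kernel associated with the sequence $(M_p)_{p\in\Z}$, following the classical strategy used for Marcinkiewicz- or Mikhlin-type multiplier theorems on the torus, but in the scale of little-H\"older spaces. First I would reduce matters to a localized estimate: write the multiplier operator $T_M$ as convolution against the periodic distribution $K=\sum_{p\in\Z}M_p e^{ipx}$, and recall that on the circle the norm of $T_M\in\mathcal L(C^s(\s),C^r(\s))$ is controlled by suitable decay of the smooth dyadic pieces of $K$. Concretely, I would introduce a smooth Littlewood--Paley partition of unity $1=\sum_{j\ge 0}\varphi_j$ on the frequency side, with $\varphi_j$ supported in $\{|p|\sim 2^j\}$ for $j\ge 1$, and set $K_j:=\sum_{p}\varphi_j(p)M_p e^{ipx}$. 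The point is that conditions $(i)$--$(iii)$ are exactly the discrete analogues of the bounds needed to show
\[
\|K_j\|_{L^1(\s)}\lesssim 2^{j(s-r)}, \qquad \|x K_j\|_{L^1(\s)}\lesssim 2^{j(s-r-1)},
\]
uniformly in $j$; the second estimate is the one that encodes smoothness and uses the second difference $(iii)$ via summation by parts twice (Abel summation), while $(i)$ and $(ii)$ feed the first estimate.

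The key steps, in order, are: (1) rewrite $T_M f=\sum_j K_j * f$ and observe $K_j*f$ only sees the frequencies of $f$ near $2^j$, so it suffices to bound $\|K_j*f\|_{C^r}\lesssim 2^{j(s-r)}\|f\|_{C^s}$ on each block; (2) use the standard fact that for a function $g$ with spectrum in $\{|p|\lesssim 2^j\}$ one has $\|g\|_{C^r}\sim \sup_j 2^{jr}\|g_j\|_{L^\infty}$ together with Young's inequality $\|K_j*f_j\|_{L^\infty}\le\|K_j\|_{L^1}\|f_j\|_{L^\infty}$; (3) prove the two displayed kernel estimates by writing $K_j(x)=\sum_p\varphi_j(p)M_p e^{ipx}$, performing two discrete integrations by parts against $e^{ipx}$ to gain a factor $x^{-2}$ away from the origin, and estimating near the origin trivially by the number of terms — this is where $(i)$, $(ii)$, $(iii)$ enter, bounding respectively $|M_p|$, $|\Delta M_p|$, $|\Delta^2 M_p|$ with the correct powers of $|p|\sim 2^j$; (4) sum the blocks: because the estimate on each block gains the exact exponent $2^{j(s-r)}$ and consecutive blocks overlap only boundedly, the series $\sum_j$ telescopes against the Hölder norm and gives $\|T_M f\|_{C^r}\lesssim\|f\|_{C^s}$. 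Finally, to land in the \emph{little}-H\"older (i.e.\ small H\"older) space $h^r(\s)$ rather than just $C^r(\s)$, I would note that $T_M$ maps trigonometric polynomials to trigonometric polynomials, hence maps the dense subset $C^\infty(\s)\subset C^s(\s)$ into $C^\infty(\s)\subset h^r(\s)$, and then extend by density using the operator norm bound just established; one should be slightly careful that the hypotheses are stated for $C^s$ but the conclusion is naturally about $h^r$, which is automatic here precisely because of this polynomial-preserving property.

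The main obstacle I expect is step (3), the passage from the three \emph{discrete} difference conditions to the \emph{continuous} kernel decay $|K_j(x)|\lesssim 2^{j}(2^j|x|)^{-2}$ on each dyadic block: one must set up the Abel summation carefully so that the boundary terms produced by the cutoff $\varphi_j$ are themselves controlled — here it helps to choose $\varphi_j$ smooth so that $\varphi_j$ and its discrete differences are $O(2^{-j})$ and $O(2^{-2j})$ respectively, and then the Leibniz rule for finite differences distributes the derivatives between $\varphi_j$ and $M_p$, each distribution being covered by one of $(i)$--$(iii)$. A secondary technical point is the non-integrality of $r$ and $s$: this is needed so that the characterization of $C^s(\s)$ via Littlewood--Paley blocks (equivalently, via Zygmund classes) coincides with the classical H\"older norm, and so that no logarithmic losses appear when summing the geometric series in step (4). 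Since this lemma is quoted as \cite[Theorem 3.4]{EM}, in the paper itself I would simply cite it; but if a self-contained argument is wanted, the outline above — dyadic decomposition, two Abel summations, Young's inequality per block, geometric summation, density for the little-H\"older target — is the route I would write out in full.
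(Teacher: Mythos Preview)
The paper does not prove this lemma at all: it is simply quoted as \cite[Theorem~3.4]{EM} and used as a black box. You correctly anticipated this in your final paragraph, so there is nothing to compare against on the paper's side.

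As for the self-contained sketch you offer, the Littlewood--Paley plus Abel-summation strategy is a standard and sound route to such a multiplier criterion, and the way you propose to distribute the discrete differences between the cutoff $\varphi_j$ and the symbol $M_p$ via a Leibniz rule is exactly the right idea. One small correction: the lemma as stated maps $C^s(\s)$ to $C^r(\s)$, not into the small H\"older space $h^r(\s)$, so your closing density argument is unnecessary here (the small H\"older spaces appear elsewhere in the paper, in the parabolic argument for Theorem~\ref{MT:0}, but not in this lemma). Also note that in the paper the lemma is only applied to show that $\p_\eta\Psi(\lambda,0)$ is Fredholm of index zero by exhibiting an inverse symbol, so even a weaker version restricted to symbols of the specific form $\mu_k(\lambda)^{-1}$ would suffice for the application.
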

Using Lemma \ref{UT} and the relations \eqref{FD} and \eqref{FDS}, it is not difficult to see that   $\p_\eta\Psi(\lambda,0)$ is an isomorphism if  $\lambda$ is chosen such that 
$\mu_k(\lambda)\neq0$ for all integers $ k\geq1.$
To find these values of  $\lambda$  we solve  the quadratic equation $\mu_k(\lambda)=0$ and find the solutions
\[
{\Lambda^{i}_k}:=-\frac{\ov \omega}{2}\frac{\tanh(k)}{k}{+(-1)^i}\sqrt{\frac{g(\h-\ov \h)+\sigma k^2}{\ov \h}\frac{\tanh(k)}{k}+\frac{\ov\omega^2}{4}\frac{\tanh^2(k)}{k^2}}, \quad i=1,2.
\]
We consider the case $\sigma=0$ first.
Since $x\mapsto\tanh(x)/x$ is a strictly decreasing function mapping  $[0,\infty)$ onto $(0,1]$, and, for $B>0,$
 $t\mapsto t+\sqrt{t^2+Bt}$ is increasing (resp. $t\mapsto t-\sqrt{t^2+Bt}$ is decreasing) on $[0,1]$, 
we conclude that $(\Lambda^i_k)$ are strictly monotone sequences.
If $\sigma\neq0,$ we choose $\sigma_0>0$ such that $(\Lambda^i_k)$ are strictly monotone sequences for all $\sigma>\sigma_0.$
By Lemma \ref{UT}, we see that in both cases $\p_\eta\Psi(\Lambda^i_k,0),  i=1,2,$ is a Fredholm operator of index zero having a one dimensional kernel.

This leads us to the existence result of this paper.
Besides existence of analytic curves consisting of traveling internal waves, we determine the second order Taylor expansions for these curves in a neighbourhood of the laminar flow solutions.
These expansions give us sufficient information to provide the precise picture of the streamlines (the level curves of the stream functions) in the frame  moving with wave speed $c$, see Theorem \ref{MT:2}.

\begin{thm} \label{MT:1} Assume that $\ov \h<\h$ and if $\sigma\neq 0$, then let $\sigma>\sigma_0$. 
Given $k\in\N,$ $k\geq1,$ there exists  $\e_k>0$ and real-analytic curves 
\[(\lambda_k^{i},\eta_k^i):(-\e_k,\e_k)\to\R\times C^{2+\alpha}_{k,0,ev }(\s), \qquad i=1,2,\]
consisting only of real-analytic solutions of \eqref{NE} of minimal period $2\pi/k$  and having exactly one crest and trough per period. 
These are the only solutions of \eqref{NE} close to  $(\Lambda_k^i,0)$, and for  $s\to0$ we have
\begin{equation}\label{eq:asy}
\begin{aligned}
\lambda_k^{i}(s)&=\Lambda_k^{i}+O(s^2),\\
\eta_k^i (s)&=-\cos(kx)s+\alpha_k^i \cos(2kx)s^2+O(s^3)\quad\text{ in} \ C^{2+\alpha}_{k,0,ev }(\s),
\end{aligned} \qquad i=1,2,
\end{equation}
with constants $\alpha_k^i$ given by \eqref{eq:const} (with $\Lambda$ replaced by $\Lambda_k^i$).
\end{thm}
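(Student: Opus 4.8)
The plan is to obtain the bifurcating curves by applying the Crandall--Rabinowitz theorem on bifurcation from simple eigenvalues \cite{CR} to the equation \eqref{NE}, regarded as a zero problem for the real-analytic map $\Psi\in C^\omega(\R\times\U_{k,0,ev},C^{\sigma'+\alpha}_{k,0,ev}(\s))$ of Lemma \ref{L:1}, at the point $(\Lambda_k^i,0)$; then to upgrade the abstract output by invoking Theorem \ref{MT:0} for the real-analyticity of the profiles, and finally to determine the second order expansion \eqref{eq:asy} by a Lyapunov--Schmidt computation.

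First I would check the hypotheses of \cite{CR}. The analyticity of $\Psi$ and the identity $\Psi(\lambda,0)=0$ for all $\lambda$ (the trivial branch of laminar flows) are already available. By Lemma \ref{L:2}, $\p_\eta\Psi(\lambda,0)$ is the Fourier multiplier with symbol $\mu_k(\lambda)$ from \eqref{FDS}; since $\mu_k(\Lambda_k^i)=0$ while $\mu_{jk}(\Lambda_k^i)\neq0$ for every $j\geq2$ — because the sequences $(\Lambda_n^i)_n$ are strictly monotone (here $\sigma>\sigma_0$ enters when $\sigma\neq0$), and $\Lambda_k^i=\Lambda_{jk}^{3-i}$ is impossible by the elementary estimate $\sqrt{\tfrac{g(\h-\ov\h)+\sigma n^2}{\ov\h}\tfrac{\tanh n}{n}+\tfrac{\ov\omega^2}{4}\tfrac{\tanh^2 n}{n^2}}\ \geq\ \tfrac{|\ov\omega|}{2}\tfrac{\tanh n}{n}$ applied with $n=k$ — Lemma \ref{UT} shows that on $C^{2+\alpha}_{k,0,ev}(\s)$ the operator $\p_\eta\Psi(\Lambda_k^i,0)$ is Fredholm of index zero, with kernel spanned by $\cos(kx)$ and with range equal to the closed hyperplane of functions whose $\cos(kx)$--Fourier coefficient vanishes. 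The transversality condition follows from \eqref{FD}--\eqref{FDS}, which give $\p_\lambda\p_\eta\Psi(\Lambda_k^i,0)[\cos(kx)]=\mu_k'(\Lambda_k^i)\cos(kx)$, together with $\mu_k'(\Lambda_k^i)\neq0$: indeed $\mu_k(\cdot)$ is a quadratic with positive leading coefficient whose two roots $\Lambda_k^1<\Lambda_k^2$ are distinct because its discriminant is positive when $\ov\h<\h$; hence this vector is not in the range of $\p_\eta\Psi(\Lambda_k^i,0)$.

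With these ingredients the analytic version of \cite{CR} yields $\e_k>0$ and a real-analytic curve $s\mapsto(\lambda_k^i(s),\eta_k^i(s))$ through $(\Lambda_k^i,0)$ with $\eta_k^i(s)=-s\cos(kx)+o(s)$ in $C^{2+\alpha}_{k,0,ev}(\s)$ (the minus sign is a normalisation of the kernel vector), and such that this curve together with the trivial branch exhausts the solution set of \eqref{NE} near $(\Lambda_k^i,0)$. For $0<|s|<\e_k$ the function $\eta_k^i(s)$ is $2\pi/k$--periodic but, having leading Fourier mode $\cos(kx)$, is not $2\pi/(jk)$--periodic for any $j\geq2$, so its minimal period is $2\pi/k$; and since $\eta_k^i(s)$ is $C^{2+\alpha}$--close to $-s\cos(kx)$, whose derivative has exactly two simple zeros per period, it has exactly one crest and one trough per period. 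The real-analyticity $\eta_k^i(s)\in C^\omega(\s)$ is the assertion of Theorem \ref{MT:0}: its hypothesis \eqref{eq:CD} in the case $\sigma=0$ holds near $s=0$ because at $s=0$ one has $|\nabla\psi_0|^2+|\nabla\ov\psi_0|^2=(\Lambda_k^i)^2$ on $y=0$, with $\Lambda_k^i\neq0$ since $\mu_k(0)=2\big[g(\ov\h-\h)-\sigma k^2\big]<0$, and the strict inequality persists under the small-amplitude perturbation.

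It remains to compute the second order coefficients. Writing the bifurcating curve in Lyapunov--Schmidt form $\lambda_k^i(s)=\Lambda_k^i+s\lambda_1+O(s^2)$, $\eta_k^i(s)=-s\cos(kx)+s^2\eta_2+O(s^3)$ with $\eta_2$ in a complement of $\spn\{\cos(kx)\}$, I would substitute into $\Psi(\lambda_k^i(s),\eta_k^i(s))=0$ and expand to order $s^2$ using the Taylor expansion of $\Psi$ at $(\Lambda_k^i,0)$. The $\cos(kx)$--component forces $\lambda_1=0$, because $\p^2_{\eta\eta}\Psi(\Lambda_k^i,0)[\cos(kx),\cos(kx)]$ contains only the constant and $\cos(2kx)$ modes — the constant one being annihilated by the mean-zero normalisation — and therefore lies in the range; the $\cos(2kx)$--component, upon dividing by $\mu_{2k}(\Lambda_k^i)\neq0$, then determines $\eta_2=\alpha_k^i\cos(2kx)$, giving \eqref{eq:asy} with $\alpha_k^i$ as in \eqref{eq:const}. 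The only laborious point, and the one I expect to be the main obstacle, is this last computation: it requires the explicit second derivatives $\p^2_{\eta\eta}\B$, $\p^2_{\eta\eta}\ov\B$ and $\p^2_{\eta\eta}\ov\T(\Lambda_k^i,0)$, the latter obtained by differentiating the Dirichlet problem \eqref{eq:DPs} twice in $\eta$ at $\eta=0$ and solving the resulting inhomogeneous ODEs in $y$ mode by mode, exactly as in the proof of Lemma \ref{L:2}. I expect this bookkeeping, rather than any conceptual difficulty, to be where the real work lies.
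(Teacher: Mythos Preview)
Your proposal is correct and follows essentially the same route as the paper: verification of the Crandall--Rabinowitz hypotheses via Lemmas \ref{L:1}--\ref{L:2} and Lemma \ref{UT}, real-analyticity of the profiles by Theorem \ref{MT:0} using $\Lambda_k^i\neq0$, the crest/trough count from the leading $-s\cos(kx)$ term, and the second-order expansion obtained by differentiating $\Psi(\lambda(s),\eta(s))=0$ twice and solving the resulting linear Dirichlet problems mode by mode. Your sign argument $\Lambda_n^1<0<\Lambda_n^2$ to exclude cross-family coincidences $\Lambda_k^i=\Lambda_{jk}^{3-i}$ even makes explicit a point the paper leaves implicit; the only part you have not actually carried out is the explicit evaluation of $\p^2_{\eta\eta}\Psi(\Lambda,0)[\cos(kx)]^2=A_k\cos(2kx)$, which the paper performs in full via \eqref{2psi}--\eqref{eq:constants}, but you have correctly identified this as straightforward bookkeeping.
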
 
\begin{proof} We verify first  that the assumptions of the theorem on bifurcations from simple eigenvalues due to Crandall and Rabinowitz \cite[Theorem 1.7]{CR} are satisfied.
We already know that, when $\lambda=\Lambda_k ^i$ for some $k\in\N$ and $i=1,2,$ the Fr\'echet   $\p_\eta\Psi(\Lambda^i_k,0)$ is a Fredholm operator with 
\begin{equation*}
\Kern \p_\eta\Psi(\Lambda^i_k,0)=\spn\{\cos(kx)\} \qquad\text{and}\qquad \im \p_\eta\Psi(\Lambda^i_k,0)\oplus \spn\{\cos(kx)\}=C_{k,0, ev}^{\sigma'+\alpha}(\s).
\end{equation*}
Furthermore, differentiating \eqref{FD} with respect to $\lambda$ we obtain that
\[
\p^2_{\lambda\eta}\Psi(\Lambda^i_k,0)[\cos(kx)]==\pm4\ov \h\sqrt{\frac{g(\h-\ov \h)+\sigma k^2}{\ov \h}\frac{k}{\tanh(k)}+
\frac{\ov\omega^2}{4}}\cos(kx),
\]
which implies $\p^2_{\lambda\eta}\Psi(\Lambda^i_k,0)[\cos(kx)]\notin\im \p_\eta\Psi(\Lambda^i_k,0).$
The existence of the analytic bifurcation curves follows now from the above mentioned theorem and Lemma \ref{L:1}.

We pick now a solution $(\lambda(s),\eta(s))$ of \eqref{NE} located on one of the curves $(\lambda_k^{i},\eta_k^i), \ i=1,2,$ and denote by
 $\psi:=\T(\eta(s))\circ\phi_{\eta(s)}^{-1}$ and $\ov \psi:=\ov\T(\lambda, \eta(s))\circ\ov\phi_{\eta(s)}^{-1}$ the  stream functions  associated to it.
In order to prove that $\eta(s)$ is real-analytic, we show  that the assumption \eqref{eq:CD} is fulfilled provided $\e_k$ is sufficiently small.
 Indeed, since $\T(0)=\psi_0$ and $\ov\T(\Lambda_k^i,0)=\ov\psi_0,$ with $\psi_0$ and $\ov\psi_0$ given by \eqref{lf}, we obtain  from 
 $|\nabla \psi_0|^2+| \nabla \ov\psi_0|^2=|\Lambda_k^i|^2>0$ on $y=0$ that \eqref{eq:CD} is satisfied by the laminar flows $(\lambda_k^{i}(0),\eta_k^i(0)), \ i=1,2.$
Choosing $\e_k$ small enough, the real-analyticity of $\eta(s)$ follows now from Theorem \ref{MT:0}, by making use of the continuity  of the bifurcation curves and of the solutions operators $\T$  and $\ov\T$.

Further on,  we prove the asymptotic expansions  \eqref{eq:asy} and show that the internal traveling waves we obtain have exactly one crest and trough per period.
To this end, we fix $k\in\N$, $k\geq 1,$ and, to ease notation, we  let in this final part of the proof $\Lambda:=\Lambda_k^{i}$, for 
 $i\in\{1,2\},$ and denote by $(\lambda,\eta)$ the corresponding branch of solutions $(\lambda_k^{i},\eta_k^i).$
Since the bifurcation curve $(\lambda,\eta)$ is real-analytic, we obtain from Theorem 1.7 in \cite{CR} that 
\begin{equation}\label{eq:asy2}
\lambda(s)=\Lambda+{\lambda_s(0)}s+O(s^2)\qquad\text{and}\qquad\eta (s)=-s\cos(kx)+\tau(s)+O(s^2)\quad\text{ in} \ C^{2+\alpha}_{k,0, ev }(\s),
\end{equation}
with $\tau(0)={\tau_s(0)}=0$ {and where the index $s$ denotes the derivative with respect to the variable $s$.}
Additionally, the function $\tau$ takes values in the closed complement $X_0 $ of $\spn\{\cos(kx)\} $ in $C^{2+\alpha}_{k,0,ev}(\s).$
Proceeding similarly as in  \cite{W}, we find  first from \eqref{eq:asy2} that
 \[
 \eta'=sk\sin(kx)+O(s^2)\qquad\text{and}\qquad \eta''={s}k^2\cos(kx)+{O(s^2)}\qquad\text{ in} \ C(\s).
 \]
 Therefore,  $\eta''(0)>0$, $\eta''(\pi/k)<0,$
 and, since $\eta'$ is odd, we also have $\eta'(0)=\eta'(\pi/k)=0.$ 
We resume that $\eta'$ is positive on $(0,\pi/k)$ provided $\e_k$ is small, meaning that the wave has its  creast  located at $x=\pi/k$ and the trough at $x=0.$

The next step of the proof is to determine  the derivatives ${\lambda_s(0)}$ and ${\tau_{ss}(0)}.$
Differentiating the relation $\Psi(\lambda(s),\eta(s))=0$ twice with respect to $s$ we find,  at $s=0$,  that   ${\lambda_s(0)}$ and  ${\tau_{ss}(0)}$ are related by 
\begin{equation}\label{eq:firstequation}
-2{\lambda_s(0)}\p^2_{\lambda,\eta}\Psi(\Lambda,0)[\cos(kx)]+\p^2_{\eta\eta}\Psi(\Lambda,0)[\cos(kx)]^2  +\p_\eta\Psi(\Lambda,0)[{\tau_{ss}(0)}]=0.
\end{equation}
Clearly, we need to find the second order derivative 
\begin{equation}\label{2psi}
\begin{aligned}
\p^2_{\eta\eta}\Psi(\lambda,0)[\cos(kx)]^2=&\ov \h\left(\ov I-\int_\s\ov I\, dx\right)- \h\left( I-\int_\s I\, dx\right),
\end{aligned}
\end{equation}
where, setting  $\xi:=\cos(kx),$ we made the following notation
\begin{equation}\label{2psi2}
\begin{aligned}
I:=&\p^2_{\eta\eta}\B(0,\psi_0)[\xi]^2-2\p^2_{\eta w}\B(0,\psi_0)[\xi,\p\T(0)[\xi]]-\p^2_{w w}\B(0,\psi_0)[\p\T(0)[\xi]]^2\\
&-\p_{ w}\B(0,\psi_0)[\p^2\T(0)[\xi]^2], \\
\ov I:=&\p^2_{\eta\eta}\ov\B(0,\ov\psi_0)[\xi]^2+2 \p^2_{\eta\ov w}\ov\B(0,\ov\psi_0)[\xi,\p_\eta\ov\T(\Lambda,0)[\xi]]
+\p^2_{\ov w\hspace{0.25mm} \ov w}\B(0,\ov\psi_0)[\p_\eta\ov\T(\Lambda,0)[\xi]]^2\\
&+ \p_{\ov w}\ov\B(0,\ov\psi_0)[\p^2_{\eta\eta}\ov\T(\Lambda,0)[\xi]^2].
\end{aligned}
\end{equation}
In view of  \eqref{lf}, we compute
 \[
\begin{array}{lll}
&\p^2_{\eta\eta} \ov\B(0,\ov\psi_0)[\xi]^2=2\Lambda^2\left(3\xi^2+\xi'^2\right), & \p^2_{\eta \ov w}\ov \B(0,\ov\psi_0)[\xi,\ov v]=4\Lambda\xi\tr\p_y\ov v-2 \xi'\Lambda \tr \p_x \ov v,\\[1ex]
&\p^2_{ \ov w\hspace{0.25mm}\ov w}\B(\eta,w)[\ov v]^2=2\tr\left((\p_x \ov v)^2+(\p_y\ov v)^2\right),& \p^2_{ ww}\B(\eta,w)[v]^2=2\tr \left((\p_x v)^2+(\p_y v)^2\right),\\[1ex]
&\p^2_{\eta\eta} \B(0,\psi_0)=\p^2_{\eta w}\B(0,\psi_0)=0, 
\end{array}
\]
and, recalling  \eqref{firstorder}, we have $\p_{ w}\B(0,\psi_0)=0.$ 
Consequently, we need   to determine only the derivatives $\p\T(0)[\xi]$ and
 $\p^2_{\eta\eta}\ov\T(\Lambda,0)[\xi]^2$ to obtain an explicit expression for the right-hand side of \eqref{2psi}.
Concerning  $\p\T(0)[\xi]$, we have to study a linear Dirichlet problem similar to \eqref{eq:LDP}, and one finds 
\begin{equation}\label{For1}
\p\T(0)[\cos(kx)]=\omega(y^2+y)\cos(kx).   
\end{equation}
As for the  the second order derivative, we differentiate the Dirichlet problem \eqref{eq:DPs} for $\ov w$ twice with respect to $\eta$ and  obtain, at $\eta=0$, that
$\ov v:=\p^2_{\eta\eta}\ov\T(\Lambda,0)[\xi]^2$ is the solution of the problem
\begin{equation}\label{eq:2DPs2}
\Delta \ov v=-2\p\ov \A(0)[\xi]\p_\eta\ov\T(\Lambda,0)[\xi]-\p^2\ov \A(0)[\xi,\xi]\ov\psi_0\quad \text{in} \, \0^t_0,\qquad \ov v=0 \quad \text{on} \, \p\0_0^t,
\end{equation}
where
\begin{align*}
&\p^2\ov \A(0)[\xi]^2\ov\psi_0=\ov\omega\left(2(1-y)^2\xi'^2+6\xi^2\right)-(1-y)(\ov\omega y+\Lambda)\left(4\xi'^2+2\xi\xi''\right),\\
&\p\ov \A(0)[\xi]\p_\eta\ov\T(\Lambda,0)[\xi]=\left(-2(1-y)\xi'\p_{xy}^2+2\xi\p_{yy}^2-(1-y)\xi''\p_y\right)\left(\p_\eta\ov\T(\Lambda,0)[\xi]\right).
\end{align*}
By \eqref{w_k}, we have   $\p_\eta\ov\T(\Lambda,0)[\cos(kx)]=w_k(y)\cos(kx),$ so that  we can express the right-hand side  of the first equation of \eqref{eq:2DPs2}
as follows
\begin{align*}
&2\p\ov \A(0)[\xi]\p_\eta\ov\T(\Lambda,0)[\xi]+\p^2\ov \A(0)[\xi]^2\ov\psi_0=E_0(y)+E_{2k}(y)\cos(2kx),
\end{align*}
whereby 
\begin{align*}
E_0(y):=&-\ov \omega+2k^2\Lambda\left(\frac{\sinh(ky)}{\tanh(k)}-\cosh(ky)\right)-(1-y)k^3\Lambda\left( \frac{\cosh(ky)}{\tanh(k)}-\sinh(ky)\right)\\
E_{2k}(y):=&-\ov\omega+2k^2\ov \omega(1-y)^2+2k^2\Lambda\left(\frac{\sinh(ky)}{\tanh(k)}-\cosh(ky)\right)\\
&+3(1-y)k^3\Lambda\left( \frac{\cosh(ky)}{\tanh(k)}-\sinh(ky)\right).
\end{align*}
Due to the linearity of \eqref{eq:2DPs2}, we write  
 $\p^2_{\eta\eta}\ov \T(\Lambda,0)[\cos(kx)]^2=c_k(y)+d_k(y)\cos(2kx),$    with
 $c_k$  denoting the solution of \eqref{eq:oe} when $(k,b_k)=(0,-E_0)$  and $d_k$ being the solution of \eqref{eq:oe} with $(k,b_k)$ are replaced by $(2k,-E_{2k}).$
Since all Fr\'echet derivatives of $\T$ and $\ov\T$ with respect to $\eta$ have zero boundary values, of relevance for our purpose are only the first derivatives 
\begin{align*}
&c'_k(0):=-k^2\Lambda-\frac{\ov\omega}{2},\\[1ex]
&d'_k(0):=-k^2\Lambda -\frac{k\Lambda }{\tanh(k)}+\frac{2k^2\Lambda }{\tanh(k)\tanh(2k)}-\ov\omega+\frac{k\ov\omega}{\tanh(2k)}.
\end{align*}
Summarising, we have shown that
\begin{align*}
I=&\omega^2+\omega^2\cos(2kx),\\
\ov I=&\left(\frac{k\Lambda }{\tanh(k)}+\ov\omega\right)^2+\frac{2k\Lambda^2}{\tanh(k)}+\ov\omega^2+\Lambda\ov\omega-\Lambda^2k^2\\
&+\left[\left(\frac{k\Lambda }{\tanh(k)}+\ov\omega\right)^2+\frac{4k^2\Lambda^2 }{\tanh(k)\tanh(2k)}+\frac{2k\ov\omega\Lambda }{\tanh(2k)}-3k^2\Lambda^2\right]\cos(2kx).
\end{align*}
Recalling \eqref{2psi}, we  find that $\p^2_{\eta\eta}\Psi(\Lambda,0)[\cos(kx)]^2=A_k\cos(2kx),$
whereby 
\begin{equation}\label{eq:constants}
A_k:=\left[\left(\frac{k\Lambda }{\tanh(k)}+\ov\omega\right)^2+\frac{4k^2\Lambda^2 }{\tanh(k)\tanh(2k)}+\frac{2k\ov\omega\Lambda }{\tanh(2k)}-3k^2\Lambda^2\right]-\h\omega^2.
\end{equation}

To finish, we observe that  $\p^2_{\eta\eta}\Psi(\Lambda,0)[\cos(kx)]^2$ belongs to  $\im \p_\eta\Psi(\Lambda,0)$, meaning that $\cos(kx)$ is orthogonal on $\p^2_{\eta\eta}\Psi(\Lambda,0)[\cos(kx)]^2$ in $L_2(\s).$
Whence, if we multiply the relation \eqref{eq:firstequation} 
by $\cos(kx)$ and integrate it then over the unit circle, we get that ${\lambda_s(0)}=0.$
Moreover, since the restriction $\p_\eta\Psi(\Lambda,0):X_0\to\im\p_\eta\Psi(\Lambda,0)$ is an isomorphism  and ${\tau_{ss}(0)}\in X_0,$  \eqref{eq:firstequation} yields
 \begin{equation}\label{eq:sequation}
{\tau_{ss}(0)}=-(\p_\eta\Psi(\Lambda,0))^{-1}\p^2_{\eta\eta}\Psi(\Lambda,0)[\cos(kx)]^2.
\end{equation}
Setting 
\begin{equation}\label{eq:const}
\alpha_k:=-\frac{A_k}{2\mu_{2k}(\Lambda)},
\end{equation}
we conclude from \eqref{FD}  that $\tau_{ss}(0)=2\alpha_k\cos(2kx),$
 and together with \eqref{eq:asy2} we find the desired expansion for $\eta$.
 This completes the proof.
\end{proof}

\section{Streamlines for internal waves with stagnation points on the profile}\label{S:3}
In the remainder of this paper we restrict our considerations to the  curve $(\lambda_k^{1},\eta_k^1)$ and denote  by $(\lambda,\eta)$ a solution of \eqref{eq:PP} which lies on this curve. 
As in the proof of Theorem \ref{MT:1}, we write $\psi$ and $\ov\psi$ for the stream functions associated to this solution.
The next theorem shows that the traveling  wave solutions found in Theorem \ref{MT:1} possess points  which are stagnation points when considering the wave profile as a part of the fluid located below, 
but loose this feature when considering the interface as being a part of  the fluid located above.
Particularly, as we approach  these points, the  velocity $(u,v)$ of the fluid particles located  below the wave profile satisfies $(u,v)\to(c,0)$.

We shall exemplify this in Theorem \ref{MT:2} in the particular case when $\ov\omega\geq0$ and  $\omega>0.$  
We may choose also $\omega<0, $ and only the orientation along   the streamlines beneath the wave profile has to be changed in Figure \ref{F:1}.
Allowing $\ov\omega$ to be negative, we may obtain stagnation points within the fluid  located above.
However, the picture of the additional critical layer we could obtain in $\0_\eta^t$ has been studied in detail in  \cite{CV2, EMM2, W}.

\begin{thm}[The streamlines in the moving frame]\label{MT:2} Additionally to Theorem \ref{MT:1}, assume that $\ov\omega\geq0$ and  $\omega>0.$ 
Then, within a period, the streamlines corresponding to a solution $(\lambda,\eta)$ on the curve $(\lambda_k^{1},\eta_k^1)$ are qualitatively described in Figure \ref{F:1}.
Particularly, there exists a critical layer consisting of closed streamlines which is delimited from above by the wave profile and from below by a separatrix which connects two stagnation points (solutions of $\nabla\psi=0$)
which are located on the wave profile.
Furthermore, there exists exactly one more stagnation point which is situated in the center of the critical layer.  
\end{thm}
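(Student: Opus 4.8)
The plan is to combine the second-order expansions of Theorem~\ref{MT:1} with a blow-up of the stream function $\psi$ at the level $y=0$, which reduces the local analysis to the study of an explicit Kelvin-type cat's eye.

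First I would pass to the fixed domain $\0^b_0$. From $\eta(s)=-s\cos(kx)+O(s^2)$, the analyticity of $\T$, and the identities $\T(0)=\psi_0$ and \eqref{For1}, one gets
\[
w:=\T(\eta(s))=\frac{\omega y^2}{2}-s\,\omega(y^2+y)\cos(kx)+O(s^2)\qquad\text{in }C^{2+\alpha}\big(\ov{\0^b_0}\big),
\]
and, since $w(x,0;s)=0$ for all $x\in\s$ and all $s$, every Taylor coefficient in $s$ vanishes on $\{y=0\}$. Using $\nabla\psi=\nabla w\cdot (D\phi_\eta)^{-1}$ together with $w_x(\cdot,0)\equiv0$, along the profile $\nabla\psi=0$ is equivalent to $w_y(\cdot,0)=0$; since $w_y(\cdot,0;s)=-s\,\omega\cos(kx)+O(s^2)$ in $C^{1+\alpha}(\s)$, the function $s^{-1}w_y(\cdot,0;s)$ converges in $C^{1+\alpha}(\s)$ to $-\omega\cos(k\cdot)$, which has exactly two simple zeros per period; hence for $s$ small $\psi$ has on the profile exactly two stagnation points $P_0^{(1)},P_0^{(2)}$, near $x=\pi/(2k)$ and $x=3\pi/(2k)$. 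At such a point $\psi\equiv0$ along the (analytic, by Theorem~\ref{MT:0}) profile forces $D^2\psi(P_0)(\Gamma',\Gamma')=0$ for the tangent vector $\Gamma'=(1,\eta')$, while differentiating $x\mapsto\partial_\nu\psi(x,\eta(x))$ gives $D^2\psi(P_0)(\Gamma',\nu)=\partial_x(\partial_\nu\psi)=\frac{s\,\omega k\sin(kx)}{1+\eta}+O(s^2)\neq0$; thus $P_0$ is a nondegenerate saddle and $\{\psi=0\}$ enters $\0^b_\eta$ at $P_0$ along a single transversal arc. Finally, since $\nabla\psi_0=(0,\omega y)$ is bounded away from zero off any fixed strip $\{|y|<\delta\}$, a uniform estimate shows that, after fixing a large constant $R$, $\psi$ has no critical point in $\{|y|\ge Rs\}$ once $s$ is small; so all stagnation points lie in the thin strip $\{|y|\le Rs\}$, which for small $s$ also contains $\{0<y<\eta\}$.

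The core step is the blow-up $y=s\zeta$, $\tilde\psi(x,\zeta;s):=s^{-2}w(x,s\zeta;s)$. Using $w(x,0;s)=0$ and the first-order expansion above one checks
\[
\tilde\psi(x,\zeta;s)=\frac{\omega\zeta^2}{2}-\omega\zeta\cos(kx)+O(s)=:\tilde\psi_\infty(x,\zeta)+O(s),
\]
with $C^2$-convergence uniformly for $x\in\s$ and $\zeta$ in compact sets, the rescaled fluid region being $\{-1/s<\zeta<0\}\nearrow\{\zeta<0\}$ with profile $\{\zeta=0\}$. Critical points of $\tilde\psi$ correspond exactly to stagnation points of $\psi$ at heights $y=s\zeta$, and the limit $\tilde\psi_\infty$ is explicit: $\nabla\tilde\psi_\infty=0$ forces $\zeta=\cos(kx)$ and $\sin(2kx)=0$, so in $\{\zeta\le0\}$ its only critical points are $(\pi/(2k),0)$ and $(3\pi/(2k),0)$, lying on $\{\zeta=0\}$ with nondegenerate indefinite Hessian (determinant $-\omega^2k^2<0$), i.e. saddles, and the interior point $(\pi/k,-1)$, where $\tilde\psi_\infty=-\omega/2<0$ and the Hessian is $\mathrm{diag}(\omega k^2,\omega)$, i.e. a centre. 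By nondegeneracy and the implicit function theorem each of these persists for small $s$; combined with the previous paragraph (which excludes stagnation points outside the strip and identifies the two strip saddles with $P_0^{(1)},P_0^{(2)}$), this shows that for $s$ small $\psi$ has in $\0^b_\eta$ a single interior stagnation point $P^\ast$, situated below the crest $x=\pi/k$, which is a nondegenerate centre with $\psi(P^\ast)=-\tfrac{\omega}{2}s^2+O(s^3)<0$.

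It remains to assemble the global portrait. Since $\psi\ge0$ on $\partial\0^b_\eta$ (with $\psi=0$ on the profile, $\psi=m=\omega/2$ on the bottom) and $P^\ast$ is the only interior critical point, $P^\ast$ is the global minimum of $\psi$; by the Morse lemma at $P^\ast$ and the structure of the sublevel sets $\{\psi<\tau\}$ for $\tau\in(\psi(P^\ast),0)$ (no further critical values there), each level set $\{\psi=\tau\}$, $\tau\in(\psi(P^\ast),0)$, is a single Jordan curve around $P^\ast$, so $\{\psi<0\}$ is a topological disc foliated by closed streamlines — the critical layer, with $P^\ast$ at its centre. Its boundary lies in $\{\psi=0\}$; the maximum principle for the subharmonic $\psi$ ($\Delta\psi=\omega>0$) excludes closed components of $\{\psi=0\}$ inside $\0^b_\eta$, and the local saddle structure at $P_0^{(1)},P_0^{(2)}$ forces $\{\psi=0\}$ to be the profile together with one separatrix arc joining $P_0^{(1)}$ to $P_0^{(2)}$; hence the critical layer is bounded above by the arc of the profile over the crest and below by this separatrix, which is precisely Figure~\ref{F:1}. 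For the upper fluid I would argue directly from $\ov\psi_0=\ov\omega y^2/2+\lambda y$: since $\ov\omega\ge0$, the function $\partial_y\ov\psi_0=\ov\omega y+\lambda$ has no zero on $[0,1]$ (here the sign of $\lambda=\Lambda^1_k$ is used), so a perturbation argument yields $\nabla\ov\psi\neq0$ in $\ov{\0^t_\eta}$ and all streamlines above are graphs over $\s$; thus no stagnation point is sensed from above, and the orientation along the streamlines below the profile is fixed by $\omega>0$. I expect the main obstacle to be the blow-up step: making the convergence $\tilde\psi\to\tilde\psi_\infty$ quantitative and $C^2$ on the $s$-dependent rescaled domain, and upgrading nondegeneracy of the limiting critical points into persistence of the entire qualitative streamline portrait — in particular excluding spurious critical points and spurious components of $\{\psi=0\}$ for small $s>0$.
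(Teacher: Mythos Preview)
Your approach is sound but takes a genuinely different route from the paper. The paper works through Proposition~\ref{P:2}: it computes the \emph{second-order} expansion
\[
\frac{\psi_x}{\omega}=\frac{k}{2}\Bigl(\frac{\sinh(2ky)}{\tanh(2k)}+\cosh(2ky)\Bigr)\sin(2kx)\,s^2+O(s^3)
\]
(which requires $\p^2\T(0)[\cos(kx)]^2$), extracts from it a curve on which $\psi_x=0$, and then applies the strong maximum principle to the harmonic function $\psi_x$ on either side to fix its sign throughout $\0^b_\eta$; together with the zero set of $\psi_y$ and the key inequality $y_b<y_\zeta(\zeta)$ (the $\psi_{xy}=0$ curve stays strictly below the $\psi_y=0$ curve at the boundary stagnation point), this pins down the three stagnation points and the streamline pattern by pure sign-chasing. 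Your blow-up $y=s\zeta$, $\tilde\psi=s^{-2}w$ absorbs the factor $s^2$ into the rescaling, so the first-order expansion of $w$ already produces the explicit cat's-eye limit $\tilde\psi_\infty=\tfrac{\omega}{2}\zeta^2-\omega\zeta\cos(kx)$, whose phase portrait is read off by hand; this is conceptually clean and avoids the second-order computation, but trades the paper's global sign tables for a persistence argument and a Morse-type assembly of sublevel sets. Two minor corrections: the subharmonicity of $\psi$ does not by itself exclude closed interior components of $\{\psi=0\}$ --- it only gives $\psi<0$ inside any such component, and you then need uniqueness of the interior minimum $P^\ast$ to conclude that $\{\psi<0\}$ is connected and touches the profile (where $\p_\nu\psi>0$ near the crest); and the claim, shared with the paper, that $\p_y\ov\psi_0=\ov\omega y+\Lambda^1_k$ is nonvanishing on $[0,1]$ is equivalent to $\ov\omega^2(1-\tanh(k)/k)<(g(\h-\ov\h)+\sigma k^2)\tanh(k)/(k\ov\h)$, which can fail for large $\ov\omega>0$.
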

  \begin{figure}
$$\includegraphics[width=10cm]{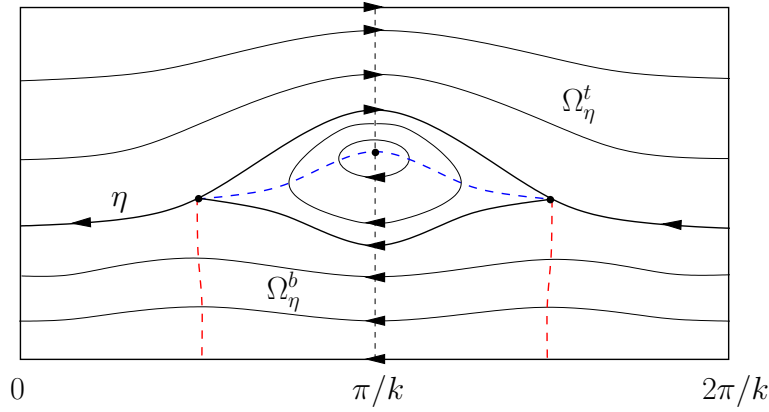}$$
\caption{This figure illustrates the streamlines in the moving frame in the case when $\omega>0,$ $\ov\omega\geq0$, and $\Lambda=\Lambda_k^1$. 
The dashed blue curve is the critical level where $\psi_y$ vanishes and the dashed red curves consist only of points where $\psi_x$ vanishes.
The critical layer of closed streamlines is located between the  wave profile and the separatrix (the thick line below the wave surface) which connects the two stagnation points on the wave profile.
A further stagnation point   is located below the wave profile on the line $x=\pi/k$.}
\label{F:1}
\end{figure}

In order to prove Theorem \ref{MT:2}, we rely on the expansions \eqref{eq:asy}.
Compared to \cite{EMM2, W} the situation we consider is more degenerate, because, by our choice  \eqref{eq:conm} of the constants,  bifurcation occurs when $\psi_{0,y}=0$ on $y=0,$ and we need  second order 
expansions for the bifurcation curves, cf. \eqref{eq:asy}, to be able to analyse the streamlines within the fluid domains. 
First, we prove:

\begin{prop}\label{P:2} Let   $(\lambda,\eta)$ be located on  the bifurcation curve $(\lambda_k^{1},\eta_k^1)$, and define
\[
\0_{b}:=\{(x,y)\in\0_\eta^b\,:\, 0<x<\pi/k\}.
\]
If $\e_k$ is small and $\omega\neq0$, we have the following properties:
\begin{itemize}
\item[$(a)$] If $\ov\omega\geq0$, then $\ov\psi_y>0$ in $\ov\0^{t}_\eta $ and $\ov\psi_x(x,y)<0$ for all $x\in(0,\pi/k)$ and $\eta(x)\leq y<1; $
\item[$(b)$] $\omega\psi_{yy}>0$ in $\ov{\0_\eta^b}$;
\item[$(c)$] There exists a unique point $\zeta\in(0,\pi/k)$ with $\zeta=\pi/(2k)+O(s)$, and a continuous curve $y_\zeta:[0,2\pi/k]\to \R$ which satisfies
\begin{itemize}
\item[$(i)$] $y_\zeta(x)=\eta(x)$ for $x\notin(\zeta,2\pi/k-\zeta)$ and $-1<y_\zeta(x)<\eta(x)$ for all $x\in(\zeta,2\pi/k-\zeta);$
\item[$(ii)$] $\omega\psi_y>0$ for $x\in (\zeta,2\pi/k-\zeta)$ and $y\in (y_\xi(x),\eta(x)]$,  $\psi_y(x,y_\zeta(x))=0$ for all $x\in[\zeta,2\pi/k-\zeta],$
and $\omega\psi_y<0$ elsewhere in $\0_\eta^b$; 
\end{itemize}
\item[$(d)$]  There exists  a curve $x=\xi(y):[-1,y_b]\to (0,\pi/k)$ such that  $(\xi(y_b),y_b)$ is a point on the wave profile and 
 \begin{equation}\label{eq:psi1}
 \left\{
 \begin{array}{lll}
 \omega\psi_{xy}(x,y)>0, &\text{if $y<y_b$ and $x<\xi(y)$};\\
  \omega\psi_{xy}(x,y)=0, &\text{if $x=\xi(y)$};\\
  \omega\psi_{xy}(x,y)<0,&\text{if $y_b<y$ or $[y<y_b$ and $\xi(y)<x<\pi/k]$}.
 \end{array}
 \right.
 \end{equation}
 Moreover, $\xi=\pi/(2k)+O(s)$ uniformly in $y\in[-1,y_b].$ 
 \item[$(e)$] There exists  a curve $x=\ov \xi(y):[-1,\ov y_b]\to (0,\pi/k)$ such that  $(\ov\xi(\ov y_b),\ov y_b)$ is a point on the wave profile and 
 \begin{equation}\label{eq:psi2}
 \left\{
 \begin{array}{lll}
 \omega\psi_x(x,y)>0, &\text{if $y<\ov y_b$ and $x<\ov \xi(y)$};\\
 \omega\psi_x(x,y)=0, &\text{if $x=\ov \xi(y)$};\\
 \omega\psi_x(x,y)<0,&\text{if $y>\ov y_b$ or $[ y<\ov y_b$ and $\ov \xi(\ov y)<x<\pi/k]$}.
 \end{array}
 \right.
 \end{equation}
 Moreover, $(\zeta,y_\zeta(\zeta)=(\ov\xi(\ov y_b),\ov y_b)$ and $\ov\xi= \pi/(2k)+O(s)$ uniformly in $y\in[-1,\ov y_b].$
\end{itemize}
\end{prop}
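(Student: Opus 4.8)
\textbf{Proof proposal for Proposition \ref{P:2}.}

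The plan is to combine the second-order expansions \eqref{eq:asy} for $(\lambda,\eta)$ with the explicit form \eqref{lf} of the laminar flows and the formulas \eqref{For1}, \eqref{w_k} for the first- and second-order Fr\'echet derivatives of the solution operators, in order to obtain expansions of $\psi$, $\ov\psi$ and their derivatives up to the order in $s$ needed to read off the sign information claimed in $(a)$--$(e)$. Since $\psi=\T(\eta(s))\circ\phi_{\eta(s)}^{-1}$ and $\ov\psi=\ov\T(\lambda(s),\eta(s))\circ\ov\phi_{\eta(s)}^{-1}$, and $\T$, $\ov\T$, $\phi_\eta$, $\ov\phi_\eta$ are all real-analytic in $\eta$ (and $\lambda$), Taylor's theorem gives, uniformly on $\ov{\0_0^b}$ (resp. $\ov{\0_0^t}$),
\[
\psi\circ\phi_{\eta(s)} = \psi_0 + s\,\p\T(0)[-\cos(kx)] + O(s^2), \qquad
\ov\psi\circ\ov\phi_{\eta(s)} = \ov\psi_0 + s\,\p_\eta\ov\T(\Lambda,0)[-\cos(kx)] + \Lambda_s(0)\cdot 0 + O(s^2),
\]
with $\lambda_s(0)=0$ by Theorem \ref{MT:1}; a second-order term is needed only for $\psi$ near the degenerate line $y=0$, and there the relevant second derivatives are already computed (the functions $c_k$, $d_k$ in the proof of Theorem \ref{MT:1}). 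I would then compose with $\phi_{\eta(s)}^{-1}$, again expanded in $s$, and differentiate; because $\psi_0(x,y)=\omega y^2/2$ has $\psi_{0,y}=\omega y$, $\psi_{0,yy}=\omega$, the leading behaviour of $\psi_y$ is $\omega y + O(s)$, which is $O(s)$ on $y=O(s)$, so near the interface one really sees the first-order correction, explaining why second-order expansions of $\eta$ are indispensable.

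With these expansions in hand, each item is a sign count. For $(b)$, $\psi_{yy} = \omega + O(s)$ on $\ov{\0_\eta^b}$, so $\omega\psi_{yy}=\omega^2+O(s)>0$ for $\e_k$ small. For $(a)$, $\ov\psi_{0,y}=\ov\omega y+\Lambda$; since $\Lambda=\Lambda_k^1<0$ has modulus bounded below and $\ov\omega\geq 0$, one checks $\ov\psi_{0,y}$ does not vanish on $[0,1]$ (using that $\Lambda_k^1=-\ov\omega\tanh(k)/(2k)-\sqrt{\cdots}$ has $|\Lambda_k^1|>\ov\omega\tanh(k)/(2k)\ge \ov\omega y$ for $y\le 1$), hence $\ov\psi_y>0$ after the $O(s)$ perturbation; for $\ov\psi_x$ one uses that at leading order $\ov\psi_x$ is a multiple of $\sin(kx)$ times a function of $y$ of one sign on $(0,\pi/k)$, then perturbs. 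For $(c)$, $(d)$, $(e)$ I would apply the implicit function theorem to the equations $\psi_y=0$, $\psi_{xy}=0$, $\psi_x=0$ respectively: at $s=0$ these reduce to $\omega y=0$, etc., which are degenerate, so the correct normalisation is to rescale — e.g. write $\psi_y(x,y) = \omega y + s\,h(x,y)+O(s^2)$ and solve $y = -s\,h(x,y)/\omega + O(s^2)$, giving the curve $y_\zeta$; the endpoints $x=\zeta$, $x=2\pi/k-\zeta$ are where $y_\zeta$ meets the profile $\eta$, and matching $\omega y + s h = 0$ with $y=\eta(x)=-s\cos(kx)+O(s^2)$ pins $\zeta=\pi/(2k)+O(s)$ since $\cos(k\zeta)$ must balance $h(\zeta,0)/\omega$, which vanishes to leading order at $x=\pi/(2k)$. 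The curves $\xi$, $\ov\xi$ in $(d)$, $(e)$ are obtained the same way from $\psi_{xy}$ and $\psi_x$, both of which are, at first order in $s$, products of $\sin(kx)$ or $\cos(kx)$ with explicit $y$-profiles, so their zero sets are graphs $x=\pi/(2k)+O(s)$ by the implicit function theorem applied after dividing out the vanishing factor; the coincidence $(\zeta,y_\zeta(\zeta))=(\ov\xi(\ov y_b),\ov y_b)$ and the claim that these are points on the profile follow by comparing the defining equations on $y=\eta(x)$, where the tangential derivative of $\psi$ vanishes (so $\psi_x=\eta'\p_\nu\psi$, forcing $\psi_x=0$ exactly where $\eta'=0$ or $\p_\nu\psi=0$, and $\p_\nu\psi=0$ is the stagnation condition $\nabla\psi=0$).

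\textbf{Main obstacle.} The routine but delicate part is bookkeeping the $O(s)$ and $O(s^2)$ terms through the composition with the diffeomorphisms $\phi_{\eta(s)}^{-1}$ and $\ov\phi_{\eta(s)}^{-1}$ and verifying that the implicit-function-theorem arguments for $(c)$--$(e)$ are uniform in $y\in[-1,1]$ — the expansions must hold in $C^1$ up to the boundary, and the division by the factor that vanishes on $\{x=\pi/(2k)\}$ has to be justified so that the perturbed zero set is still a single smooth graph with no spurious branches. Concretely, one must show that the $y$-profiles multiplying $\sin(kx)$ in the expansions of $\psi_x$ and $\psi_{xy}$ (built from $\omega(y^2+y)$ and from $w_k(y)$ in \eqref{w_k}) are nonvanishing, or at least have a controlled sign, on the whole interval $[-1,y_b]$; this monotonicity/sign analysis of the explicit hyperbolic-function expressions, together with ensuring $\e_k$ is chosen small enough uniformly, is where the real work lies, and it is what forces the hypothesis $\omega\neq 0$ (to keep $\psi_{yy}$ from degenerating) and $\ov\omega\geq 0$ (to keep $\ov\psi_y$ of one sign).
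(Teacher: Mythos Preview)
Your overall strategy is right, but there is a genuine gap in the analysis of $(d)$ and $(e)$, and a secondary one in $(a)$ and $(e)$ concerning strict inequalities near the vertical boundaries.

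\textbf{The first-order cancellation.} You assert that $\psi_x$ and $\psi_{xy}$ are ``at first order in $s$, products of $\sin(kx)$ or $\cos(kx)$ with explicit $y$-profiles''. This is false: the order-$s$ contribution to $\psi_x$ vanishes identically. Writing $w=\T(\eta)$ and $\psi(x,y)=w(x,(y-\eta)/(1+\eta))$, one has
\[
\psi_x = w_x\bigl(x,\tfrac{y-\eta}{1+\eta}\bigr) - \frac{(1+y)\eta'}{(1+\eta)^2}\,w_y\bigl(x,\tfrac{y-\eta}{1+\eta}\bigr),
\]
and at order $s$ the two terms are $s\omega k(y^2+y)\sin(kx)$ and $-s\omega k(y+1)y\sin(kx)$, which cancel. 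One is therefore \emph{forced} to compute the second-order term $\p^2\T(0)[\cos(kx)]^2$ (not the functions $c_k,d_k$ you cite, which pertain to $\ov\T$); the paper does this and finds a new function $\beta_k$, leading to
\[
\frac{\psi_x}{\omega}=\frac{k}{2}\Bigl(\frac{\sinh(2ky)}{\tanh(2k)}+\cosh(2ky)\Bigr)\sin(2kx)\,s^2+O(s^3).
\]
The dependence is on $\sin(2kx)$, not $\sin(kx)$; note that $\sin(kx)$ does not even vanish in $(0,\pi/k)$, so your scheme could not produce the interior zero curve $\ov\xi$ at all. The claim $\xi,\ov\xi=\pi/(2k)+O(s)$ is true precisely because $\sin(2kx)$ vanishes at $\pi/(2k)$, and the $y$-profile $f_k(y)=k\sinh(2k(1+y))/(2\sinh(2k))$ is strictly positive with $f_k'>0$.

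\textbf{Maximum principle versus perturbation.} For $\ov\psi_x<0$ in $(a)$ and for the strict sign of $\psi_x$ in $(e)$, your perturbation argument cannot close near $x=0$ and $x=\pi/k$, where the leading term vanishes and the $O(s^2)$ (resp.\ $O(s^3)$) remainder competes. The paper instead uses that $\Delta\ov\psi_x=0$ and $\Delta\psi_x=0$, reads the sign of the boundary data from the relation $\psi_x(x,\eta(x))=-\eta'(x)\psi_y(x,\eta(x))$ (and analogously for $\ov\psi$), and applies the strong elliptic maximum principle on suitable subdomains. This step is not optional; the expansion alone does not give the strict interior sign up to the lateral walls.
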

 \begin{proof}  
 To prove the claim $(a)$, we  recall that $\ov\psi_{0,y}=2\ov\omega y+\Lambda_k^{1}>0$ in  $\ov\0^{t}_0$.  
Using  the continuity of  the operator $\ov\T$ and of the bifurcation curve $(\lambda_k^{1},\eta_k^1)$ it follows that  $\ov\psi_y>0$ in $\ov\0^{t}_\eta $.
 Furthermore, we notice that $\ov\psi_x(x,\eta(x))=-\eta'(x)\ov\psi_y(x,\eta(x))<0$ for all $x\in(0,\pi/k),$ and since $\ov\psi$ is even in $x$ we find that $\ov\psi_x\leq0$ on the boundary $\p\0_t,$
 whereby $\0_t:=\{(x,y)\in\0_\eta^t\,:\, x\in (0,\pi/k)\}.$ 
 Recalling \eqref{eq:psiDP}$_2$, we find that $\Delta\ov\psi_x=0$ in $\0_t$ and the claim $(a)$ follows from the strong elliptic maximum principle.

We consider  in the remainder of the proof only the fluid located below the interface $\eta.$  
The functions $\eta$ and $\psi$ being both even functions, it suffices to restrict our considerations to the domain $\0_b.$
In order to prove $(b)$ and $(c)$, we use the first order  Taylor expansion 
\[
w:=\T(\eta)=\T(0)+\p\T(0)[\eta]+O(s^2)=\frac{\omega y^2}{2}-s\p\T(0)[\cos(kx)]+O(s^2) \qquad \text{in} \ C^{2+\alpha}(\ov{\0_0^b}).
\]
Recalling \eqref{For1},    we get 
\begin{equation}\label{eq:axy}
w=\frac{\omega y^2}{2}-s\omega (y^2+y)\cos(kx)+O(s^2)\qquad \text{in} \ C^{2+\alpha}(\ov{\0_0^b}).
\end{equation}
Since $\psi(x,y)=w(x,(y-\eta(x))/(1+\eta(x)))$ for all $(x,y)\in\ov\0_\eta^b$, we immediately obtain the assertion $(b)$.
Furthermore, at $y=-1$, we have that $\p_y w(\cdot,-1)=-\omega+O(s)$, while differentiating \eqref{eq:axy} at $y=0$, yields
\[
\p_yw (x,0)=-s\omega\cos(kx)+O(s^2).
\]
Repeating the arguments in the proof of Theorem \ref{MT:1}, we find that $\omega\p_yw$ is strictly increasing on $[0,\pi/k]$.
Since $\omega\p_yw(0,0)<0$ and $\omega\p_yw(\pi/k,0)>0$, we deduce that there exists a unique $\zeta\in(0,\pi/k)$ such that $\p_yw(\zeta,0)=0.$
By virtue of $(b),$ we find for each $x\in(\zeta,\pi/k]$ a point $ \wt  y_\zeta(x)\in(-1,0)$ with the property that  $\omega\p_yw(x,y)<0$  for $y<\wt y_\zeta(x)$ and 
$\omega\p_yw(x,y)>0$ for $y>\wt y_\zeta(x).$
The mean value theorem shows that $\zeta$ is close to $\pi/2k$ in the sense that $\zeta=\pi/(2k)+O(s).$
We appeal now to the fact that  $\p_y w$ is even to obtain the desired conclusion $(c)$.

In order to prove $(d)$ and $(e)$, we are obliged, to use a better approximation for $\T(\eta)$ 
\begin{equation}\label{axy}
\begin{aligned}
w=&\T(\eta)=\T(0)+\p\T(0)[\eta]+\frac{1}{2}\p^2\T(0)[\eta]^2+O(s^3)\\
=&\frac{\omega y^2}{2}-s\p\T(0)[\cos(kx)]+s^2\left( \p\T(0)[\alpha_k\cos(2kx)]+\frac{1}{2}\p^2\T(0)[\cos(kx)]^2\right)+O(s^3) 
\end{aligned}
\end{equation}
in $C^{2+\alpha}(\ov{\0_0^b}),$  cf. \eqref{eq:asy}.
To determine $\p^2\T(0)[\cos(kx)]^2$ we differentiate the Dirichlet problem \eqref{eq:DPs} corresponding to $\0_\eta^b$ twice with respect to $\eta$, and find that $\p^2\T(0)[\cos(kx)]^2$  
is the solution of the Dirichlet problem
\begin{equation*}
\Delta v=-2\p\A(0)[\cos(x)]\p\T(0)[\cos(x)]-\p^2\A(0)[\cos(kx)]^2\psi_0\qquad\text{in} \, \0^b_0,\qquad  v=0\quad\text{on} \, \p\0_0^b.
\end{equation*}
Similarly as in the proof of Theorem \ref{MT:1}, we find that
\begin{equation}\label{H2}
\p^2\T(0)[\cos(kx)]^2=\frac{\omega(y^2+y)}{2}+\omega\beta_k(y)\cos(2kx),
\end{equation}  
whereby
\begin{equation}\label{H3}
\beta_k(y)=-\frac{1}{2}\left(\frac{\sinh(2ky)}{\tanh(2k)}+\cosh(2ky)-(1+y)^2\right).
\end{equation} 
For the clarity of the exposition we leave the proof of \eqref{H2}  to  the interested reader. 

We sum now the relations \eqref{axy}-\eqref{H3} and use \eqref{For1} to conclude  that
\begin{align}\label{pam}
 \frac{w}{\omega}=&\frac{ y^2}{2}-s(y^2+y)\cos(kx)+s^2\left[\frac{y^2+y}{4}+\left(\alpha_k( y^2+y)+\frac{\beta_k(y)}{2}\right)\cos(2kx)\right]+O(s^3)
 \end{align}
 in $C^{2+\alpha}(\ov{\0_0^b}).$
 The next goal is to find the expansion corresponding to $\psi_x.$
Recalling \eqref{eq:asy}, we have  
\begin{align*}
&\frac{\eta'}{(1+\eta)^2}=\eta'(1-2\eta)+O(s^3)=sk\sin(kx)+s^2k(1-2\alpha_k)\sin(2kx) +O(s^3)\\
&\frac{y-\eta}{1+\eta}=y(1-\eta)-\eta+O(s^2)=y+(y+1)s\cos(kx)+O(s^2),
\end{align*}
 and taking into account that $\psi(x,y)=w(x,(y-\eta)/(1+\eta))$ we find from 
\begin{align*}
\frac{\psi_x(x,y)}{\omega}=&\frac{w_x}{\omega}\left(x, \frac{y-\eta(x)}{1+\eta(x)}\right)-\frac{(1+y)\eta'}{(1+\eta)^2}\frac{w_y}{\omega}\left(x,\frac{y-\eta(x)}{1+\eta(x)}\right)
\end{align*}
that
\begin{align}\label{eq:AA}
\frac{\psi_x}{\omega}=&\frac{k}{2}\left(\frac{\sinh(2ky)}{\tanh(2k)}+\cosh(2ky)\right)\sin(2kx)s^2+O(s^3) \qquad \text{in} \ C^{2+\alpha}(\ov{\0_\eta^b}).
\end{align}
This is the key point in the proof of $(d)$ and $(e)$.

To keep the notation short we   introduce  the auxiliary function $f:[-1,0]\to\R$ defined by
 \[
f_k(y):=\frac{k}{2}\left(\frac{\sinh(2ky)}{\tanh(2k)}+\cosh(2ky)\right)=\frac{k\sinh(2k(1+y))}{2\sinh(2k)}, \qquad y\in[-1,0].
 \]
 Since $\psi_{xy}=s^2\omega f'_k(y)\sin(2kx)+O(s^3) $ and $f_k'$ is strictly positive in $[-1,0]$,
 we immediately obtain the assertion $(d)$.
 Additionally,   for arbitrary  $\delta\in (0,\pi/(8k))$, the 
derivative $\omega\psi_{xxy}(x,y)<0$ for all $(x,y)\in\ov\0_b$ with $x\in [\pi/(4k)+\delta,3\pi/(4k)-\delta],$  provided $\e_k$ is sufficiently small.
Recalling that $\psi=\omega/2$ on $y=-1,$ we get $\psi_{xx}(x,-1)=0$ and therefore $\omega\psi_{xx}(x,y)<0$ for all $x\in [\pi/(4k)+\delta,3\pi/(4k)-\delta] $ and $y>-1.$
On the other hand, the mixed derivative $\omega\psi_{xy}(\pi/(4k)+\delta,y)>0$ and $\omega\psi_{xy}(3\pi/(4k)-\delta,y)<0$ for all $y$, and taking into account that  $\psi_{x}(x,-1)=0,$ we conclude that there exists a point
 $\ov y_b\in(\min\eta,\max\eta)$ and a curve
 $x=\ov\xi(y):[-1,\ov y_b]\to (0,\pi/k)$ such that $(\ov \xi(\ov y_b),\ov y_b)$
is a point on the wave profile and 
 \begin{equation}\label{eq:psi3}
\psi_x(\ov\xi(y),y)=0, \qquad\text{for all  $y\in [-1,\ov y_b]$}.
 \end{equation}
In fact $y_b=y_\xi(\xi)$, and $\ov \xi(y)=\pi/(2k)+O(s)$ uniformly in $y\in[-1,\ov y_b].$
This curve $[x=\ov\xi(y)]$ splits the domain $\0_b$ into two subdomains  $\0_b\setminus[x=\ov\xi(y)]=\0_{b}^l\cup\0_{b}^r,$
$\0_{b}^l$ being the domain which has $x=0$ as  boundary component. 
Since $\Delta\psi_x=0$ in $\0_{b}^l$ and, since by $(c)$, $\omega\psi_x\geq0$ on  $\p\0_{b}^l$, the strong elliptic maximum principle ensures that $\omega\psi_x>0$ in $\0_{b}^l$.
The same argument shows that $\omega\psi_x<0$ in $\0_{b}^r$.

Finally, in order to show that $(\zeta,y_\zeta(\zeta)=(\ov\xi(\ov y_b),\ov y_b)$, we differentiate the relation $\psi(x,\eta(x))=0$ with respect to $x$ and obtain, by virtue of $\eta'>0$ in $(0,\pi/k),$
that, if $x\in(0,\pi/k), $ then   $\psi_x(x,y)=0$ if and only if $\psi_y(x,y)=0.$ 
We infer from $(c)$ that the desired equality holds and  the proof is completed.
 \end{proof}
 
We come now to the proof of Theorem \ref{MT:2}. 
It is based on Proposition \ref{P:2} and the fact that the curves obtained in Proposition \ref{P:2} $(c)$ and $(d)$ never intersect.

\begin{proof}[Proof of Theorem \ref{MT:2}]
 In order to precisely determine  the streamlines within the fluid located below we need to relate the two points $(\zeta,y_\zeta(\zeta)$ and $(\xi( y_b), y_b).$
Therefore, we differentiate the equation $\psi(x,\eta(x))=0$ twice with respect to $x$ and find, at $x=\zeta$, that
 \[
2\eta'(\zeta)\frac{\psi_{xy}(\zeta,y_\zeta(\zeta))}{\omega}=\left[-\frac{\psi_{xx}}{\omega}-\eta'^2\left(1-\frac{\psi_{xx}}{\omega}\right)\right](\zeta,y_\zeta(\zeta))
=-\frac{\psi_{xx}(\zeta,y_\zeta(\zeta))}{\omega}-\eta'^2(\zeta)+O(s^4).
\]
This relation is obtained by also using the relation  $\psi_y(\zeta,y_\zeta(\zeta))=0$ together with the equation $\Delta\psi=\omega$ in $\0_\eta^b,$  cf. \eqref{eq:psiDP}.
 Moreover, by \eqref{eq:asy2} and \eqref{eq:AA}, we find the following expansions
 \begin{align}\label{eq:AAA}
 &\eta'^2=k^2(1-\cos(2kx))s^2+O(s^3),\\
 &\frac{\psi_{xx}}{\omega}=k^2\left(\frac{\sinh(2ky)}{\tanh(2k)}+\cosh(2ky)\right)\cos(2kx)s^2+O(s^3),
\end{align}
 which yield, in the end,  
  \begin{align*}
2\eta'(\zeta)\frac{\psi_{xy}(\zeta,y_\zeta(\zeta))}{\omega}=-k^2\left\{1+\left[\left(\frac{\sinh(2ky)}{\tanh(2k)}+\cosh(2ky)\right)-1\right]\cos(2k\zeta)\right\}s^2+O(s^3).
\end{align*}
 Taking into account that  $(\zeta,y_\zeta(\zeta))\approx (\pi/(2k),0)$ and $\eta'(\zeta)>0$, we conclude that 
\[
\omega\psi_{xy}(\zeta,y_\zeta(\zeta))<0,
\]
and, by \eqref{eq:psi1}, 
\begin{equation}\label{eq:ara}
y_b<y_\zeta(\zeta).
\end{equation}
 This implies that the curve $[y=y_\zeta]$ is located entirely in the region $[\omega\psi_{xy}<0] $ and  that $y_\zeta$ is strictly increasing.
 Indeed, from the relation $\psi_y(x,y_\zeta(x))=0$ we find that
 \[
\psi_{xy}(x,y_\zeta(x))+y_{\zeta}'(x)\psi_{yy}(x,y_\zeta(x))=0
\]
 for all $x\in[\zeta,2\pi/k].$
 Since $\psi_{xy}(\zeta,y_\zeta(\zeta))<0$ and $\psi_{yy}>0,$ we conclude that $y_{\zeta} $ is  a strictly increasing function on $[\zeta,2\pi/k].$ 
 Whence, for $x\in(\zeta,\pi/k)$, the point $(x,y_\zeta(x))$ is located in the region  where $\psi_x<0,$ cf. \eqref{eq:psi2}.
 This means that the function $[\eta,\pi/k]\ni x\mapsto \omega\psi(x,y_\zeta(x)) $ attains its minimum in $x=\pi/k$ and, by $(c)$, this value is also the minimum of $\omega\psi:$
 \[
\min_{\0_b}\omega\psi=\omega\psi(\pi/k,y_\zeta(\pi/k)).
\]

We resume that there exist exactly three stagnation points one period: two on the wave surface $(\zeta,y_\zeta(\zeta))$ and  $(2\pi/k-\zeta,y_\zeta(2\pi/k-\zeta)),$ and one $(\pi/k,y_\zeta(\pi/k)$ beneath  the wave crest.
Since by Proposition \ref{P:2} $(c)$ and $(e)$ we know the sign of $\psi_x$ and $\psi_y$ in the whole domain $\0_\eta^b$, we conclude that the streamlines of the flow are as in Figure \ref{F:1}.
\end{proof}


\begin{thebibliography}{99}  

\bibitem{A} {{H. Amann}: {\em  Linear and Quasilinear Parabolic Problems, Volume I Abstract Linear Theory}, Birkh\"auser, Basel, 1995.}

  \bibitem{BLS}
 {J. L. Bona, D. Lannes \& J.-C. Saut }:
Asymptotic models for internal waves,
\textit{J. Math. Pures Appl.} 89 (2008), 538--566.


 
\bibitem{C2}
 {A. Constantin}:
The trajectories of particles in Stokes waves,
\textit{Invent. Math.} {\bf 166} (2006), 523--535.


 \bibitem{CE3} 
 {A. Constantin and J. Escher}:
Analyticity of periodic traveling free surface water waves with vorticity,
\textit{Ann. of Math.} {\bf 173} (2011), 559--568.


\bibitem{CJ}
{A. Constantin and R. S. Johnson}:
Propagation of very long water waves, with vorticity, over variable
depth, with applications to tsunamis, 
{\em Fluid Dynam. Res.} {\bf 40} (2008),
175--211.


\bibitem{CS2} 
{A. Constantin and W. Strauss}:
Exact steady periodic water waves with vorticity,
\textit{Comm. Pure Appl. Math.} {\bf 57}(4) (2004), 481--527. 


\bibitem{CS3}
{A. Constantin and W.  Strauss}: Pressure beneath a Stokes wave,
  \textit{Comm. Pure Appl. Math. } {\bf  63}(4) (2010),  533--557.





\bibitem{CS41} 
{A. Constantin and W. Strauss}: {\em Periodic traveling gravity water waves with discontinuous vorticity},
{\it Arch. Ration. Mech. Anal.} {\bf 202} (1) (2011), 133--175.



\bibitem{CV2}
{A. Constantin and E. Varvaruca}:
Steady periodic water waves with constant vorticity: Regularity and local bifurcation, {\em Arch. Rational Mech. Anal. } {\bf 199} (2011), 33--67.
  

\bibitem{CV}
 {A. Constantin and G. Villari}: Particle trajectories in linear water waves,
 \textit{J. Math. Fluid Mech.}   {\bf 10} (1)  (2008), 1--18.

\bibitem{CR}   
{M.~G. Crandall and P.~H. Rabinowitz}:  Bifurcation from simple eigenvalues,
    {\em J. Funct. Anal.} {\bf{8}} (1971), 321--340. 
    

\bibitem{DG} {{G. DaPrato and P. Grisvard}: Equations d'\'evolution abstraites nonlin\'eaires de type
parabolique, {\em  Ann. Mat. Pura Appl.} {\bf{4}} (120)  (1979), 329--396.}

 \bibitem{EEV}
{M. Ehrnstr\"om, J. Escher, and G. Villari}: Steady water waves with multiple critical layers: Interior dynamics,
to appear in {\em J. Math. Fluid Mech.}


\bibitem{EEW}
{M. Ehrnstr\"om, J. Escher, and E. Wahl\'en}: Steady water waves with multiple critical layers,
{\em SIAM J. Math. Anal.}  {\bf 43} (2011), 1436--1456. 

\bibitem{Mats} 
{M. Ehrnstr\"om and G. Villari}:  Linear water waves with vorticity: Rotational features and particle paths,
 {\em J. Differential Equations} {\bf 244} (2008), 1888--1909.
 
\bibitem{EW}
{M. Ehrnstr\"om  and E. Wahl\'en}: On steady water waves with critical layers, preprint. 
  
\bibitem{ES}    { {J. Escher and G. Simonett}: Classical solutions for Hele-Shaw models with surface tension,
{\em Adv. Differential Equations }{\bf 2} (1997) 619--642.}

\bibitem{EM} 
{J. Escher and B.-V.  Matioc}:  A moving boundary problem for periodic Stokesian Hele-Shaw flows,
 {\em Interfaces Free Bound.} {\bf 11} (2009), 119--137.

\bibitem{EMM2} 
{J. Escher,  A.--V. Matioc and B.--V. Matioc }:  On stratified steady periodic water waves with linear density distribution  and stagnation points,
  {\em J. Differential Equations} {\bf 251} (10) (2011), 2932--2949.


\bibitem{HM}
 {K.R. Helfrich and  W.K. Melville}:
Long nonlinear internal waves,
\textit{Annual Review of Fluid Mechanics}  {\bf 38} (2006), 395--425.

 \bibitem{DH2}
{D. Henry}:
Analyticity of streamlines for periodic
traveling free surface capillary-gravity water
waves with vorticity, 
 \textit{SIAM J. Math. Anal.} {\bf 42} (6), (2010), 3103--3111.
 
 \bibitem{DH4}
{D. Henry}:
Analyticity of the free surface for periodic
traveling  capillary-gravity water
waves with vorticity,  J. Math. Fluid Mech. (2011), DOI 10.1007/s00021-011-0056-z.

\bibitem{Hen2} {D. Henry}: 
Regularity for steady periodic capillary water waves with vorticity,  
to appear in \textit{Philos. Trans. R. Soc. Lond. Ser. A}.


\bibitem{DB} 
{D. Henry and B.-V.  Matioc}:  On the existence of steady periodic capillary-gravity  stratified water waves,
to appear in {\it Ann. Sc. Norm. Super. Pisa Cl. Sci.}.

\bibitem{IK}
 {D. Ionescu-Kruse}: Elliptic and hyperelliptic functions
describing the particle motion beneath
small-amplitude water waves with constant
vorticity, preprint.


\bibitem{KNS}
 {D. Kinderlehrer, L. Nirenberg,  and J. Spruck}:
 Regularity in elliptic free boundary value problems I,
  \textit{J. Anal. Math.} {\bf 34} (1978), 86--119. 
  
  
  \bibitem{Ligh} 
{J. Lighthill}: {\em Waves in fluids}, Cambridge University Press, Cambridge, 1978.


\bibitem{L} 
{A. Lunardi}:  {\em Analytic Semigroups and Optimal Regularity in Parabolic Problems},  Birkh\"auser, Basel, 1995.


\bibitem{AM}
 {A.-V. Matioc}: 
On particle trajectories in linear water waves,
  \textit{ Nonlinear Anal. Real World Appl.} {\bf 11} (5) (2010), 4275--4284. 


 \bibitem{M1}
{B.-V. Matioc}:
Analyticity of the streamlines for periodic traveling water waves with bounded vorticity.
{\em Int. Math. Res. Not.} {\bf 17} (2011), 3858--3871.


\bibitem{M} 
{J.~B. McLeod}:
 The Stokes and Krasovskii conjectures for the wave of greatest height, 
{\em Stud. Appl. Math.} {\bf 98} (1997), 311--334. 

\bibitem{P}
{P. I. Plotnikov}: Proof of the Stokes conjecture in the theory of
surface waves, {\em Stud. Appl. Math.} {\bf 108} (2) (2002), 217--244.


 \bibitem{T}
  {J.~F. Toland}: Stokes waves, {\em Topol. Methods Nonlinear Anal.} {\bf 7 [8]} (1996 [1997]), 1--48 [412--414].
  

\bibitem{W}
{E. Wahl\'en}: Steady water waves with a critical layer, {\em J. Differential Equations} {\bf 246} (2009), 2468--2483.
\end{thebibliography}
  \end{document}